\tikzstyle{mybox} = [draw=black, very thick, rectangle, rounded corners, inner ysep=5pt, inner xsep=5pt]
\theoremstyle{definition}
\newtheorem{remark}{Remark}[section]
\newtheorem{notazione}{Notation}[section]
\newtheorem{question}{Question}[section]
\newtheorem{definizione}{Definition}[section]
\theoremstyle{plain}
\newtheorem{teorema}{Theorem}[section]
\newtheorem{proposizione}{Proposition}[section]
\newtheorem{lemma}{Lemma}[section]
\newtheorem{corollario}{Corollary}[section]
\newcommand{\numberset}{\mathbb}
\newcommand{\R}{\numberset{R}}
\newcommand{\N}{\numberset{N}}
\newcommand{\Z}{\numberset{Z}}
\newcommand{\A}{\mathbb{A}}
\newcommand{\T}{\mathbb{T}}
\DeclarePairedDelimiter{\abs}{\lvert}{\rvert}
\DeclarePairedDelimiter{\norma}{\lVert}{\rVert}
\let\oldabs\abs
\def\abs{\@ifstar{\oldabs}{\oldabs*}}
\let\oldnorma\norma
\def\norma{\@ifstar{\oldnorma}{\oldnorma*}}
\title{Torsion of instability zones for conservative twist maps on the annulus}
\author[Florio]{Anna Florio$^1$}
\address{$^1$Sorbonne Université, Université de Paris, CNRS, Institut de 
	Mathématiques de Jussieu-Paris Rive Gauche, F-75005 Paris, France}
\email{anna.florio@imj-prg.fr}
\author[Le Calvez]{Patrice Le Calvez$^2$}
\address{$^2$Sorbonne Université, Université de Paris, CNRS, Institut de 
	Mathématiques de Jussieu-Paris Rive Gauche, F-75005 Paris, France et 
	Institut Universitaire de France}
\email{patrice.le-calvez@imj-prg.fr}
\date{\today}
\begin{document}

\selectlanguage{english}
\maketitle
\begin{abstract}
	\noindent For a twist map $f$ of the annulus preserving the Lebesgue measure, we give sufficient conditions to assure the existence of a set of positive measure of points with non-zero asymptotic torsion. In particular, we deduce that every bounded instability region for $f$ contains a set of positive measure of points with non-zero asymptotic torsion. Moreover, for an exact symplectic twist map $f$, we provide a simple, geometric proof of a result by Cheng and Sun (see \cite{ChengSun}) which characterizes $\mathcal{C}^0$-integrability of $f$ by the absence of conjugate points.
\end{abstract}

\section{Introduction}

\indent Let us denote $\T=\R/\Z$ and $\A=\T\times\R$. Fix the standard Riemannian metric and the standard trivialization on $\A$. This enables us to define the notion of oriented angle between two non-zero vectors in the tangent plane of a point of $\A$.

%For any $z\in\A$ denote $v=(0,1)\in T_z\A$ the unitary vertical tangent vector.\\
Let $\R^2\ni(x,y)\mapsto \pi(x,y)=(x+\Z,y)\in\A$ be the covering projection on $\A$. Let $p_1:\A\rightarrow\T$ and $p_2:\A\rightarrow\R$ denote the projections over the first and the second coordinate respectively. With an abuse of notation, we denote as $p_1,p_2:\R^2\rightarrow\R$ also the projections over the first and the second coordinate on the plane.

Denote the area form $\omega=dx\wedge dy$ and let $\text{Leb}$ be the associated Lebesgue measure on $\A$. Let $\alpha=y\,dx$ be the Liouville 1-form on $\A$. Let $T^1\A$ be the bundle of half-lines, that is
$$
T^1\A=\{(z,w)\in T\A :\ w\neq 0\} / \sim
$$
where $(z,w)\sim(z',w')$ if and only if $z=z'$ and $w'=\lambda w$ for some $\lambda\in\R,\lambda>0$. For every $z\in\A$, $T_z^1\A$ is the fiber of $z$ and for a vector $w\in T_z\A$ we will denote, with an abuse of notation, as $w\in T_z^1\A$ the equivalent class to which $w$ belongs. For every $X\subset\A$ denote as $T^1X$ the restriction of the bundle to $X$, that is $T^1X=\bigcup_{z\in X}T_z^1\A$.

For every $z\in\A$ denote as $v$ the class of the vertical vector $v=(0,1)$. For every $z\in\A$ and every $w,w'\in T^1_z\A$ let $\prec(w,w')$ be the oriented angle between $w$ and $w'$ divided by $2\pi$. We consider angles in $\T=\R/\Z$, after rescaling $\R/2\pi\Z$. Define the continuous function
\begin{eqnarray}\label{or angle}
\theta:&T^1\A\longrightarrow \T \nonumber \\
&(z,w)\longmapsto\prec(v,w).
\end{eqnarray}
It induces a closed 1-form $d\theta$ on $T^1\A$, see \cite{godbillon}. For any continuous path $[0,1]\ni t\mapsto \gamma(t)\in T^1\A$ denote
\begin{equation}\label{def path angle}
\int_{\gamma}d\theta=\Theta(1)-\Theta(0),
\end{equation}
where $\Theta:[0,1]\rightarrow\R$ is a lift of $\theta\circ\gamma$ (the definition in \eqref{def path angle} does not depend on the chosen lift $\Theta$). Let $f:\A\rightarrow\A$ be a $\mathcal{C}^1$ diffeomorphism  isotopic to the identity and $I=(f_t)_{t\in[0,1]}$ an isotopy in $\text{Diff }^1(\A)$ joining the identity to $f_1=f$. Denote as $DI=(Df_t)_{t\in[0,1]}$ the continuous isotopy such that for every $t\in[0,1]$
\begin{eqnarray*}
Df_t:&T^1\A\longrightarrow  T^1\A \\
&(z,w)\mapsto Df_t(z,w)= \left(f_t(z),Df_t(z)w\right).
\end{eqnarray*}
Extend the isotopy for $t\in \R_+$ so that $Df_{1+t}=Df_t\circ Df_1$.\\

\indent We are interested in the notion of torsion of a point: roughly speaking it is the asymptotic rotational behavior of vectors in the tangent space of the point under the action of the linearized dynamics. See \cite{ruelle} and \cite{beguin}.

More precisely, let $(z,w)\in T^1\A$. For every $n\in\N^*$ denote $DI^n(z,w)$ the continuous path $[0,1]\ni t\mapsto Df_{nt}(z,w)$. The {\it{torsion at finite time}} $n\in\N^*$ at $(z,w)\in T^1\A$ is
\begin{equation}\label{def torsion finite}
\text{Torsion}_n(f,z,w) :=\dfrac{1}{n}\displaystyle{\int_{DI^n(z,w)}}d\theta=\dfrac{1}{n}\sum_{i=0}^{n-1}\text{Torsion}_1(f,Df^i(z,w)).
\end{equation}
It does not depend on the choice of the isotopy joining the identity to $f$ and can be equivalently given by using the universal covering space of $T^1\A$ and a continuous lift $\tilde{\theta}:\widetilde{T^1\A}\rightarrow\R$ of the function $\theta$ in \eqref{or angle}, see \cite{beguin} and \cite{Flo19}.

The {\it{asymptotic torsion}} at $z\in\A$ is, whenever the limit exists,
\begin{equation}
\text{Torsion}(f,z):=\lim_{n\rightarrow+\infty}\text{Torsion}_n(f,z,w).
\end{equation}
\indent The definition of asymptotic torsion at $z\in\A$ coincides with Ruelle's definition of rotation number, see \cite{ruelle}. It does not depend on $w\in T^1_z\A$. Indeed, let $z\in\A$, $n\in\N^*$ and $w,w'\in T^1_z\A$. Consider the quantity
\begin{equation}\label{eq prop vec}
\displaystyle{\int_{DI^n(z,w)}}d\theta-\displaystyle{\int_{DI^n(z,w')}}d\theta.
\end{equation}
If $\prec(w,w')\in(0,\frac{1}{2})+\Z$, then $\prec (Df^n(z)w,Df^n(z)w')\in(0,\frac{1}{2})+\Z$ because $Df(z)$ preserves the orientation and so the quantity \eqref{eq prop vec} cannot be equal to $\frac{1}{2}+k,k\in\Z$. This is also true if $\prec(w,w')\in(-\frac{1}{2},0)+\Z$. Moreover \eqref{eq prop vec} vanishes if $\prec(w,w')\in\{0,\frac{1}{2}\}+\Z$, meaning $w=w'$ or $w=-w'$. By connectedness, for any $n\in\N^*$ and for every $w,w'\in T^1_z\A$ it holds
\begin{equation}\label{eq check control}
-\dfrac{1}{2}<\displaystyle{\int_{DI^n(z,w)}}d\theta-\displaystyle{\int_{DI^n(z,w')}}d\theta<\dfrac{1}{2}.
\end{equation}
Observe that the function $T^1\A\times\Z\ni (z,w,n)\mapsto \int_{DI^n(z,w)}d\theta\in\R$ is a cocycle over $Df$.

Assume that $f$ admits an invariant Borel probability measure $\mu$ on $\A$ with compact support. There exists a $Df$-invariant probability measure $\nu$ on $T^1\A$ whose projection on $\A$ is $\mu$. Indeed, if $\nu_0$ is a probability measure of $T^1\A$ that projects onto $\mu$, any limit point $\nu$ of the sequence $\left( \frac{1}{n}\sum_{i=0}^{n-1}Df_*^i(\nu_0) \right)_{n\geq 1}$ is $Df$-invariant and projects onto $\mu$.

Since the asymptotic torsion is the limit of the Birkhoff's sum in \eqref{def torsion finite}, by Birkhoff's Ergodic Theorem, the limit exists at $\nu$-almost every $(z,w)\in T^1\A$. Since the asymptotic torsion does not depend on the chosen $w\in T_z^1\A$ and since $\nu$ projects onto $\mu$, we deduce that the asymptotic torsion exists at $\mu$-almost every $z\in\A$, see \cite{ruelle}.
\begin{definizione}
	A {\it{positive (respectively negative) twist map}} is a $\mathcal{C}^1$ diffeomorphism $f:\A\rightarrow\A$ isotopic to the identity such that for any lift $F:\R^2\rightarrow\R^2$ of $f$ and for any $x\in\R$ the function
	$$
	\R\ni y\mapsto p_1\circ F(x,y)\in\R
	$$
	is an increasing (respectively decreasing) diffeomorphism of $\R$.
\end{definizione}
\indent It can be easily proved that $f$ is a positive twist map if and only if $f^{-1}$ is a negative twist map. Concerning the torsion for positive (respectively negative) twist map, for every $z\in\A$ and for every $n\in\N^*$ it holds $\text{Torsion}_n(f,z,v)\in (-\frac{1}{2},0)$ (respectively $(0,\frac{1}{2})$), see \cite{Flo19}. It is an outcome of the fact that, for a positive twist map, $\text{Torsion}_1(f,z,v)\in(-\frac{1}{2},0)$ and that for every isotopy $(f_t)_{t\in[0,1]}$ from $\text{Id}$ to $f$ and for every non colinear vectors $w,w'\in T_z\A$, the vectors $Df_t(z)w,Df_t(z)w'\in T_{f_t(z)}\A$ are non colinear and if $\prec(w,w')\in\left(0,\frac{1}{2}\right)+\Z$, then $\prec(Df_t(z)w,Df_t(z)w')\in\left(0,\frac{1}{2}\right)+\Z$.
\begin{definizione}\label{conj pts}
	Let $f:\A\rightarrow\A$ be a positive twist map. A point $z\in\A$ has {\it{conjugate points}} if there exist $n,k\in\N^*$ such that
	$\int_{DI^n(z,v)}d\theta =-\frac{k}{2}$, it has {\it{over-conjugate points}} if there exists $n\in\N^*$ such that
	$\int_{DI^n(z,v)}d\theta< -\frac{1}{2}$.
\end{definizione}
Observe that if a point $z\in\A$ has negative asymptotic torsion, then $z$ has over-conjugate points. Actually, the existence of points with conjugate points and with over-conjugate points is equivalent, see Lemma \ref{equiv overconj et conj}.

\indent Let us state now the main results of the article. The first result, that will be fundamental, asserts that for a twist map that preserves the Lebesgue measure, the existence of points having over-conjugate points in a $f$-periodic open domain $U$ of finite measure is equivalent to the fact that the set of points in $U$ of non-zero torsion has positive Lebesgue measure. More precisely:
\begin{proposizione}\label{prop intro 1}
	Let $f:\A\rightarrow\A$ be a positive twist map that preserves the Lebesgue measure. Let $U\subset \A$ be a $f$-periodic open set of finite Lebesgue measure. Then the following conditions are equivalent.\vspace{5pt}
	
	\begin{itemize}
		\item[$(i)$] The asymptotic torsion is defined and equal to zero at every point $z\in U$.\vspace{5pt}
		
		\item[$(ii)$] The asymptotic torsion is defined and equal to zero at $\text{Leb}$-almost every point $z\in U$.\vspace{5pt}
		
		\item[$(iii)$] No point $z\in U$ has over-conjugate points.
	\end{itemize}
\end{proposizione}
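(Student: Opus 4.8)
The implication $(i)\Rightarrow(ii)$ is immediate, so the plan is to prove $(iii)\Rightarrow(i)$ and $(ii)\Rightarrow(iii)$, which closes the loop. First I would reduce to the invariant case: writing $f^p(U)=U$, the set $W=\bigcup_{i=0}^{p-1}f^i(U)$ is open, $f$-invariant, and of finite Lebesgue measure (as $f$ preserves $\text{Leb}$), and each of $(i)$–$(iii)$ for $U$ is equivalent to its analogue for $W$ — for $(iii)$ one uses that an over-conjugate point in $f^i(U)$ yields, via the cocycle identity for $\psi_n:=\int_{DI^n}d\theta$, an over-conjugate point in $U$ (compare Lemma \ref{equiv overconj et conj}). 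So one may assume $f(U)=U$. For $(iii)\Rightarrow(i)$: if no $z\in U$ has over-conjugate points, then for every $z\in U$ and every $n\ge 1$ one has $\int_{DI^n(z,v)}d\theta\ge-\tfrac12$; combined with $\text{Torsion}_n(f,z,v)<0$ (positive twist) this forces $\int_{DI^n(z,v)}d\theta\in[-\tfrac12,0)$, hence $|\text{Torsion}_n(f,z,v)|\le\tfrac1{2n}\to 0$, so the asymptotic torsion exists and equals $0$ at every $z\in U$.

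For $(ii)\Rightarrow(iii)$ I would argue the contrapositive. Assume $(iii)$ fails, i.e. some $z_0\in U$ has $\int_{DI^N(z_0,v)}d\theta<-\tfrac12$ for some $N\in\N^*$. Since the asymptotic torsion is defined $\text{Leb}$-a.e. and, where defined, is $\le 0$ (being a limit of the quantities $\text{Torsion}_n(f,z,v)<0$), it suffices to show that it is $<0$ on a set of positive Lebesgue measure, contradicting $(ii)$. By continuity of $z\mapsto\int_{DI^N(z,v)}d\theta$, there is a nonempty open $O\subset U$ and $\delta>0$ with $\int_{DI^N(z,v)}d\theta<-\tfrac12-\delta$ on $O$. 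Decomposing orbit segments into blocks of length $N$ via the cocycle identity $\psi_{kN}(z,v)=\sum_{j=0}^{k-1}\psi_N(Df^{jN}(z,v))$ together with the a priori estimate $|\psi_N(z,w)-\psi_N(z,v)|<\tfrac12$ from \eqref{eq check control} gives
\[
\int_{DI^{kN}(z,v)}d\theta\;<\;\sum_{j=0}^{k-1}\Bigl(\int_{DI^N(f^{jN}z,\,v)}d\theta+\tfrac12\Bigr),
\]
where every block with $f^{jN}z\in O$ contributes at most $-\delta$. Using that $f|_U$ preserves $\text{Leb}|_U$ with $\text{Leb}(U)<\infty$, Poincaré recurrence guarantees that $\text{Leb}$-a.e.\ $z\in O$ visits $O$ infinitely often, and the Birkhoff ergodic theorem controls the asymptotic frequency of such visits; the plan is to deduce from this that $\lim_n\tfrac1n\int_{DI^n(z,v)}d\theta<0$ on a positive-measure subset near $O$, i.e. strictly negative asymptotic torsion there.

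The hard part will be precisely the refinement of the crude bound above: the naive comparison between the rotation of the true vector $Df^{jN}(z)v$ and that of the vertical $v$ only saves $\tfrac12$ per block, which is insufficient because a $\text{Leb}$-typical orbit may spend most of its time outside $O$, so that the accumulated "$+\tfrac12$" from blocks missing $O$ could a priori swamp the "$-\delta$" from blocks meeting it. Closing this gap requires controlling from above the angle swept by forward images of the vertical over blocks of length $N$, exploiting the cone structure of a positive twist map beyond the elementary inequalities recalled in the introduction — this pointwise estimate on the angle cocycle is the technical core of the proof. Granting it, the block decomposition combined with Poincaré recurrence and Birkhoff produces the desired positive-measure set of points with negative asymptotic torsion, contradicting $(ii)$ and completing the equivalence.
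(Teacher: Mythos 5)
The reduction to the $f$-invariant case, the direction $(iii)\Rightarrow(i)$, and the trivial $(i)\Rightarrow(ii)$ are all correct and match the paper. The substantive issue is $(ii)\Rightarrow(iii)$, where you have candidly flagged a genuine gap in your own argument — and indeed that gap is real, not an artifact of being overly cautious. Decomposing the orbit into blocks of length $N$ and using only the a priori bound $|\psi_N(z,w)-\psi_N(z,v)|<\tfrac12$ from \eqref{eq check control} cannot work: blocks on which the orbit avoids $O$ contribute an unknown quantity strictly less than $\tfrac12$ but with no uniform negativity, and a $\mathrm{Leb}$-typical orbit visits $O$ with a frequency which Kac's lemma controls only on average. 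Nothing in the block bookkeeping prevents the "$+\tfrac12$ per missed block" slack from overwhelming the "$-\delta$ per hit block" gain.

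The idea you are missing, and which is the crux of the paper's proof (Proposition \ref{prop 2.1}), is that over-conjugacy is \emph{persistent}: once $\int_{DI^m(z,v)}d\theta<-\tfrac12$, one automatically has $\int_{DI^n(z,v)}d\theta<-\tfrac12$ for \emph{every} $n\ge m$, not merely at multiples of $m$ or along returns. The proof of this is a short cocycle computation combined with a refinement of \eqref{eq check control}: the fiberwise circle map induced by $Df^{n-m}(f^m(z))$ is an increasing lift commuting not only with $+1$ but with $+\tfrac12$ (since $Df$ sends $-w$ to $-Df\,w$), and the positive twist gives $\int_{DI^{n-m}(f^m(z),v)}d\theta<0$ at the base point $v$. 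Writing $a=\int_{DI^m(z,v)}d\theta$ and $k=\lfloor -2a\rfloor\ge 1$, monotonicity and the $\tfrac12$-equivariance give
\[
\int_{DI^n(z,v)}d\theta\ \le\ -\frac{k}{2}+\int_{DI^{n-m}(f^m(z),v)}d\theta\ <\ -\frac{1}{2},
\]
which is exactly the paper's inequality. With this in hand, \eqref{eq check control} upgrades the conclusion to $\int_{DI^n(z,w)}d\theta<0$ for \emph{all} $w\in T^1_zA$ and all $n\ge m$. That uniform negativity of the angle cocycle past time $m$ is what makes the ergodic step close: the paper passes to $g=f^m$, uses the first-return map $\psi$ to the neighborhood $W$ together with Kac's lemma to see that the return-time-indexed cocycle $\phi(z,w)=\int_{DJ^{\tau(z)}(z,w)}d\theta$ is integrable and \emph{strictly negative}, and then Birkhoff applied jointly to $\phi$ and $\tau$ gives a.e.\ negative asymptotic torsion on $W$. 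So your Poincaré-plus-Birkhoff instinct is right, but the technical input that makes it sufficient is precisely the monotone persistence of $\int_{DI^n(z,v)}d\theta<-\tfrac12$, not a more careful per-block angle estimate.
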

\indent Recall that an essential curve $\gamma:\T\rightarrow\A$ is a topological embedding that is not homotopic to a point, and that an open subset $U$ of $\A$ is essential if it contains an essential curve.
\noindent We show that any $f$-periodic open subset $U$ of finite measure which is not essential must contain points with over-conjugate points. Actually, we prove the following
\begin{teorema}\label{teo intro 3}
	Let $f$ be a positive twist map that preserves the Lebesgue measure. Let $U$ be a $f$-periodic non essential open set of finite measure. Then the set of points of $U$ of negative asymptotic torsion is dense and has positive measure.
\end{teorema}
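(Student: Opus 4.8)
The plan is to derive Theorem~\ref{teo intro 3} from Proposition~\ref{prop intro 1}. Consider the set of points with over-conjugate points,
\[
W:=\{\,z\in\A:\ z\text{ has over-conjugate points}\,\}=\bigcup_{n\ge 1}\{\,z\in\A:\ \textstyle\int_{DI^n(z,v)}d\theta<-\tfrac12\,\}.
\]
Since each map $z\mapsto\int_{DI^n(z,v)}d\theta$ is continuous, $W$ is open; it is also $f$-invariant, by the cocycle property recalled after \eqref{eq check control}. As $f$ is a positive twist map, $\int_{DI^n(z,v)}d\theta\in(-\tfrac n2,0)$ for every $n\ge1$, so wherever the asymptotic torsion exists it lies in $[-\tfrac12,0]$; in particular ``non-zero torsion'' coincides with ``negative torsion'', and we write $N:=\{z\in\A:\ \text{Torsion}(f,z)\text{ exists and is }<0\}$. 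Finally, $f$ preserves the finite measure $\text{Leb}$ on $\bigcup_{0\le j<q}f^j(U)$, where $f^q(U)=U$; since the fibres of $T^1\A$ are compact, the asymptotic torsion exists at $\text{Leb}$-a.e.\ point of $U$, as explained in the introduction.

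The whole statement reduces to the claim that \emph{$W$ is dense in $U$}. Assume this. Then $W\cap U\neq\emptyset$, so condition $(iii)$ of Proposition~\ref{prop intro 1} fails for $U$, hence so does $(ii)$; since the torsion is $\text{Leb}$-a.e.\ defined and $\le0$ on $U$, it is $<0$ on a set of positive measure, i.e.\ $\text{Leb}(N\cap U)>0$. For the density of $N$ in $U$: given a non-empty open ball $B\subset U$, density of $W$ provides a non-empty open ball $B'\subset B\cap W$; the saturation $\widehat{B'}:=\bigcup_{n\in\Z}f^n(B')$ is open, $f$-invariant and of finite measure. If $B'\cap N=\emptyset$, then, the torsion being $\text{Leb}$-a.e.\ defined, $\le0$ and $f$-invariant, it vanishes $\text{Leb}$-a.e.\ on $B'$, hence on $\widehat{B'}$; so $(ii)$ of Proposition~\ref{prop intro 1} holds for $\widehat{B'}$, whence $(iii)$ holds and $\widehat{B'}\cap W=\emptyset$, contradicting $\emptyset\neq B'\subset W$. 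Therefore $B'\cap N\neq\emptyset$, so $B\cap N\neq\emptyset$, and $N$ is dense in $U$.

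To prove that $W$ is dense in $U$, observe first that, $W$ being $f$-invariant, for a ball $B\subset U$ the saturation $\widehat B:=\bigcup_{n\in\Z}f^n(B)$ meets $W$ if and only if $B$ does; so it suffices to show $\widehat B\cap W\neq\emptyset$. We treat the case where $U$ is $f$-invariant (the $f$-periodic case reduces to it). Then $\widehat B$ is a non-empty, open, $f$-invariant subset of $U$ of finite measure; it is non-essential, and every connected component $V_0$ of $\widehat B$ is non-essential and satisfies $f^m(V_0)=V_0$ for some $m\ge1$ (the $f$-orbit of a component of $\widehat B$ is finite, as $\widehat B$ has finite measure). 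By Proposition~\ref{prop intro 1}, if $\widehat B\cap W=\emptyset$ then the asymptotic torsion vanishes identically on $\widehat B$, and by Lemma~\ref{equiv overconj et conj} no point of $\widehat B$ has conjugate points (Definition~\ref{conj pts}). So the matter comes down to the following statement, which I expect to be the main obstacle: \emph{a non-empty, $f$-invariant, finite-measure open set contained in a non-essential open set contains a point with conjugate points.}

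I would prove this by contradiction. If no orbit in such a set $V$ had conjugate points, every finite orbit segment in $V$ would be a strict local minimizer of the generating action of the twist map, so each orbit in $V$ is a minimizing configuration and carries a well-defined rotation number — this is the input from the theory of orbits without conjugate points, the same circle of ideas by which the paper recovers the Cheng--Sun theorem. Fix a connected component $V_0$ with $f^m(V_0)=V_0$, a lift-component $\widetilde{V_0}\subset\R^2$ of $V_0$ (disjoint from its non-trivial integer translates, with $\text{Leb}(\widetilde{V_0})=\text{Leb}(V_0)<\infty$), and the lift $G$ of $f^m$ fixing $\widetilde{V_0}$; this affects neither the torsion nor the conjugacy condition, and $G$ differs from $F^m$ (for a lift $F$ of $f$) by the integer translation $(x,y)\mapsto(x-c,y)$, where $F^m(\widetilde{V_0})=\widetilde{V_0}+(c,0)$. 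By Poincaré recurrence, $\text{Leb}$-a.e.\ point $\tilde z$ of $\widetilde{V_0}$ is $G$-recurrent, say $G^{n_i}(\tilde z)\to\tilde z$; then $F^{mn_i}(\tilde z)=\tilde z+(n_ic,0)+o(1)$, so the (minimizing) orbit of $\tilde z$ has rotation number $c/m$ — the same value for all recurrent points of $V_0$. A recurrent minimizing orbit of rational rotation number $c/m$ is periodic, and a period-matching argument with the component $V_0$ (since $f^q(V_0)$ is a component of $\widehat B$ containing the $q$-periodic point, it equals $V_0$, so $m\mid q$, while $q\mid m$ from the rotation number) forces period $q=m$. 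Hence $f^m=\text{id}$ at $\text{Leb}$-a.e.\ point of $V_0$, and thus on all of $V_0$ — which is impossible for a positive twist map: for $m=1$ it contradicts the twist condition directly, and for $m\ge2$ it gives $\int_{DI^m(z,v)}d\theta\in\Z\cap(-\tfrac m2,0)$, hence $\le-1$, so every point of $V_0$ would have over-conjugate points, against $\widehat B\cap W=\emptyset$. The delicate ingredients are the facts from Aubry--Mather theory invoked above (disconjugacy versus local minimization, existence of the rotation number, periodicity of recurrent minimizing orbits of rational rotation number) and the reduction of a general $f$-periodic $U$ to the $f$-invariant case, which I expect to follow from the twist hypothesis together with the connectedness of essential curves.
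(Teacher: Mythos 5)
Your reduction of the theorem to the density in $U$ of the set $W$ of points with over-conjugate points, via Proposition~\ref{prop intro 1} and the $f$-invariance and openness of $W$, is correct and is exactly the reduction the paper performs (it quotes Proposition~\ref{prop 2.1} directly). The divergence, and the gap, lie in the proof of density. The paper's argument is elementary and entirely local: Lemma~\ref{lemma first sec 3} shows that on a compact set $K$ with no over-conjugate points the iterates $F^n$, $n\ge1$, strictly order any two points of $\pi^{-1}(K)$ joined by a near-vertical segment in $\pi^{-1}(K)$, namely $p_1\circ F^n(z')>p_1\circ F^n(z)$; applying Poincar\'e recurrence to $f\times f$ on $\pi(B_1)\times\pi(B_2)$, and using non-essentiality of $U$ only to conclude that the two return translates coincide ($k_1=k_2$), contradicts this ordering. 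No Aubry--Mather machinery is needed.

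Your route instead rests on the assertion that an orbit in $\widehat B$ with no conjugate points is a minimizing configuration, hence has a well-defined rotation number, and that a recurrent minimizing orbit of rational rotation number is periodic. That chain is not justified and I do not think it can be made to work in this generality. Absence of conjugate points along an orbit only says that its finite segments are \emph{local} minimizers of the discrete action (positive definiteness of the second variation with fixed endpoints); it does not make the orbit globally action-minimizing, and the Bangert/Aubry--Mather facts you invoke (well-defined rotation number, rigidity of rational recurrent minimizers) are statements about globally minimizing configurations. The referee's alternative via Cheng--Sun's Proposition~1 also does not help here: it needs absence of conjugate points on all of $\A$ to make the Hessian of the action globally positive definite, whereas you only control $\widehat B$, which may be a small wandering set. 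Finally, the step ``the $f$-periodic case reduces to the $f$-invariant case'' is not innocuous: for a non-essential $f$-periodic $U$ the union $\bigcup_{0\le j<q}f^j(U)$ may be essential, so one cannot freely pass to the saturation; the paper sidesteps this by applying recurrence along multiples of the period $M$ and using non-essentiality of $U$ itself, inside a single lifted component, to force $k_1=k_2$.
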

%\noindent Thus, the set of points of $U$ of non-zero torsion has positive Lebesgue measure by Proposition \ref{prop intro 1}.
\begin{question}
	Let $U$ be a $f$-periodic non essential open set of finite Lebesgue measure. Is the measure of the set of points of non zero torsion in $U$ equal to the measure of $U$?
\end{question}
\begin{definizione}
	Let $U\subset \A$ be an $f$-invariant essential subannulus. The dynamics $f_{\vert U}$ is {\it{$\mathcal{C}^0$-integrable}} if $f_{\vert U}$ admits a partition into essential $f$-invariant curves.
\end{definizione}
Observe that the definition of $\mathcal{C}^0$-integrability is more restrictive than the usual notion of integrability. In higher dimensions, for Tonelli Hamiltonian flows on the cotangent bundle $T^*\mathbb{T}^n$ of the $n$-dimensional torus, the analogous definition appears in \cite{AABZ}.\\
\indent Recall that a $\mathcal{C}^1$ diffeomorphism $f:\A\rightarrow\A$ that preserves the Lebesgue measure is {\it{exact symplectic}} if $f^*\alpha-\alpha$ is exact, or equivalently if for every essential curve $\gamma:\T\rightarrow\A$, the algebraic area contained between $\gamma(\T)$ and $f\circ\gamma(\T)$ is zero (zero flux condition).

We provide a simple proof of a result by Cheng and Sun, see Theorem 1 in \cite{ChengSun}.% in the framework of exact symplectic twist maps.
\begin{teorema}\label{teo intro 2}
	An exact symplectic twist map $f:\A\rightarrow\A$ is $\mathcal{C}^0$-integrable if and only if $f$ does not have points with conjugate points.
\end{teorema}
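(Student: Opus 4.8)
The plan is to prove the two implications separately; the direct one is short, and the converse carries the weight.

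\emph{Direct implication.} Assume $f$ is $\mathcal{C}^0$-integrable and write $\A=\bigsqcup_c\gamma_c$ for the partition into essential $f$-invariant curves. By Birkhoff's theorem each $\gamma_c$ is the graph of a Lipschitz function $\T\to\R$, and since $f$ is exact symplectic every orbit carried by $\gamma_c$ is action-minimizing for a generating function of $f$. A classical index computation for such discrete Lagrangian systems shows that a minimizing orbit has vanishing Morse index, and this is exactly the statement that $\int_{DI^n(z,v)}d\theta\in(-\frac{1}{2},0)$ for every $n\in\N^*$. Hence no point of $\A$ has conjugate points (nor over-conjugate points).

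\emph{Converse, step 1: geometric reformulation.} Assume now that no point of $f$ has conjugate points. By Lemma~\ref{equiv overconj et conj} no point has over-conjugate points either, so $\int_{DI^n(z,v)}d\theta\in(-\frac{1}{2},0)$ for all $z\in\A$ and all $n\in\N^*$. For a positive twist map the image $f(\ell)$ of any vertical line $\ell$ is an essential graph, and since $f$ preserves essentiality every $f^n(\ell)$ is an essential curve; the bound just recorded says precisely that the tangent direction along $f^n(\ell)$ never comes back to the vertical, so $f^n(\ell)$ is in fact an essential \emph{graph} for every $n\geq1$ — no folding ever occurs. Consequently the pushforwards of the vertical line field form a monotone family of nowhere-vertical directions that converges (monotonicity, plus the boundedness forced by the twist) to the Green line fields $G_-$ (the limit of forward pushes of past verticals) and, symmetrically, $G_+$; both are $f$-invariant, nowhere-vertical line fields on $\A$, with $G_-\leq G_+$ in the cone between them.

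\emph{Converse, step 2: from the line field to the partition.} The crux is that $G_-=G_+$ everywhere. Granting this, $G:=G_-=G_+$ is a \emph{continuous} $f$-invariant nowhere-vertical line field, which therefore integrates to an $f$-invariant foliation $\mathcal{F}$ of $\A$. Because $\mathcal F$ is everywhere transverse to the vertical foliation and invariant under the twist map $f$, each leaf is an essential curve, i.e.\ a graph over $\T$. Exact symplecticity is then used once more: if some leaf $L$ satisfied $f(L)\neq L$, then $f(L)$ would be a leaf lying entirely above or entirely below $L$, so the algebraic area between $L$ and $f(L)$ would be non-zero, contradicting the zero-flux condition; hence every leaf of $\mathcal F$ is $f$-invariant, and $\mathcal F$ is the desired partition into essential $f$-invariant curves, so $f$ is $\mathcal{C}^0$-integrable. (In the variational language of Mather and Cheng--Sun \cite{ChengSun} the same conclusion follows from the fact that a Birkhoff region of instability would contain a periodic orbit of positive Morse index, which has a conjugate point.)

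\emph{Main obstacle.} The decisive, and hardest, step is the equality $G_-=G_+$ — equivalently, that the Green line field is continuous, or that all Lyapunov exponents vanish. I would attack it on $f$-periodic open sets of finite Lebesgue measure, where Proposition~\ref{prop intro 1} already forces the asymptotic torsion to vanish identically, combined with a flux/area identity for exact symplectic twist maps in the spirit of Bialy's work; this is exactly the point where both the exactness and the global structure of the annulus are indispensable, as the non-exact positive twist map $(x,y)\mapsto(x+y,y+c)$ with $c\neq0$ illustrates — it has no conjugate points, yet is not $\mathcal{C}^0$-integrable, its images $f^n(\ell)$ being graphs that escape to infinity rather than converging to invariant curves.
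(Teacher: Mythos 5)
Your approach is genuinely different from the paper's — the paper never mentions Green bundles — but it has a real gap precisely at the step you yourself flag as "decisive, and hardest." You reduce $\mathcal{C}^0$-integrability to the statement $G_-=G_+$ and then say "I would attack it on $f$-periodic open sets of finite Lebesgue measure... combined with a flux/area identity"; this is a plan, not a proof. And even granting $G_-=G_+$ pointwise, you still need (a) that the common Green field is \emph{continuous} (equality alone does not give this without invoking the opposite semicontinuities of $G_-$ and $G_+$), (b) that a merely continuous, nowhere-vertical line field on $\A$ integrates to a foliation with uniqueness of leaves (Peano gives existence but not uniqueness), and (c) that the leaves are essential graphs, not just arcs. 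None of these are addressed. There is also a small conceptual slip in Step 1: for $f$ a positive twist map on the unbounded annulus, the image $F(\{x_0\}\times\R)$ projects \emph{injectively} onto all of $\R$, so $f(\ell)$ is not an essential curve at all; the relevant objects are images of compact vertical \emph{segments}, which the paper uses carefully in Lemma \ref{lemma sequences}.

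The paper's route avoids Green bundles entirely. From "no conjugate points" one gets that every iterate $f^n$ is still a positive twist map, hence for each rational $\rho=p/q$ the vertical-return set $\tilde C_{G_\rho}$ of $G_\rho=F^q-(p,0)$ is a $\mathcal{C}^1$ graph (Lemma \ref{lemma graph Cg}). The exact-symplectic (zero-flux) hypothesis enters exactly once, in Proposition \ref{prop graph rational}: if $\tilde C_{G_\rho}$ were not entirely fixed by $G_\rho$, one of the two wandering "lens" regions between $\tilde C_{G_\rho}$ and $G_\rho(\tilde C_{G_\rho})$ would have all its forward iterates trapped in a bounded region, contradicting area preservation. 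This yields a family of invariant essential curves $\psi_\rho$ for all $\rho\in\mathbb Q$, which is then filled in to all $\rho\in\R$ by standard Birkhoff theory (uniform Lipschitz bound, Ascoli--Arzel\`a, monotonicity and continuity of the rotation number, and properness). Your final remark about $(x,y)\mapsto(x+y,y+c)$ correctly illustrates why zero flux is indispensable, and it is consistent with where the paper uses exactness, but it does not repair the missing argument. To make your route work you would essentially have to prove a continuity-of-Green-bundles theorem for exact symplectic twist maps without conjugate points, which is harder than what the paper does and is not supplied here.
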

\indent In the continuous case, the same result holds for Tonelli Hamiltonian flows on the cotangent bundle of the $n$-dimensional torus, see Theorem 1 in \cite{AABZ}. Cheng and Sun's proof uses Aubry-Mather theory as presented by Bangert in \cite{bangert}, that is considering minimizing configurations. Their ideas can be easily adapted to any bounded invariant subannulus, see \cite{FloPHD}. Our proof is a geometrical one and it works both for exact symplectic twist maps on the unbounded annulus and for conservative twist maps on bounded essential subannuli, see Remark \ref{adapt bounded annulus}. Thus, thanks to Proposition \ref{prop intro 1}, we deduce the following
\begin{corollario}
	Let $f:\A\rightarrow\A$ be a twist map that preserves the Lebesgue measure. Let $U$ be an $f$-invariant bounded essential subannulus. Then, $f_{\vert U}$ is $\mathcal{C}^0$-integrable if and only if $\text{\emph{Leb}}$-almost every point of $U$ has zero torsion.
\end{corollario}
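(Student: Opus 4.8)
The plan is to obtain the statement by concatenating three facts already at our disposal: the equivalences of Proposition~\ref{prop intro 1}, the equivalence between the existence in $U$ of a point with over-conjugate points and of a point with conjugate points (Lemma~\ref{equiv overconj et conj}), and the bounded-annulus version of Theorem~\ref{teo intro 2} granted by Remark~\ref{adapt bounded annulus}. First we may assume that $f$ is a \emph{positive} twist map: if $f$ is negative we replace it by $f^{-1}$, which is a positive twist map leaving $U$ invariant, has the same $f$-invariant essential curves in $U$ (hence the same $\mathcal{C}^0$-integrability), and for which a point has zero asymptotic torsion if and only if it has zero asymptotic torsion for $f$.

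Next I would observe that $U$, being a bounded $f$-invariant essential subannulus of $\A$, is in particular an $f$-periodic open subset of $\A$ of finite Lebesgue measure, so Proposition~\ref{prop intro 1} applies to it: the asymptotic torsion equals zero at $\mathrm{Leb}$-almost every point of $U$ if and only if no point of $U$ has over-conjugate points. Since $U$ is invariant, for every $z\in U$ the whole forward orbit of $z$ remains in $U$, and the quantities $\int_{DI^n(z,v)}\,d\theta$ entering Definition~\ref{conj pts} are computed along this orbit; hence Lemma~\ref{equiv overconj et conj}, read inside $U$, shows that $U$ contains a point with over-conjugate points if and only if it contains a point with conjugate points. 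Combining the two, $\mathrm{Leb}$-almost every point of $U$ has zero torsion if and only if no point of $U$ has conjugate points.

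It then remains to identify this last condition with the $\mathcal{C}^0$-integrability of $f_{\vert U}$. Here I would invoke Remark~\ref{adapt bounded annulus}: although $f$ need not be exact symplectic on all of $\A$, its restriction to the bounded invariant essential subannulus $U$ has zero flux. Indeed, an essential curve $\gamma\subset U$ splits $U$ into two sub-annuli, one toward each end of $U$; as $f$ is isotopic to the identity it cannot exchange the ends, so it maps the piece of $U$ lying below $\gamma$ onto the piece of $U$ lying below $f\circ\gamma$, and these two pieces have the same finite Lebesgue measure, which is precisely the zero-flux condition for $f_{\vert U}$. The geometric proof of Theorem~\ref{teo intro 2} then applies to $f_{\vert U}$ and gives: $f_{\vert U}$ is $\mathcal{C}^0$-integrable if and only if $f_{\vert U}$ has no point with conjugate points. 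Chaining this with the previous paragraph yields the corollary.

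The step I expect to require the most care is this reduction to the invariant subannulus $U$: one must check that $f_{\vert U}$ really does satisfy the zero-flux hypothesis needed in the bounded-annulus version of Theorem~\ref{teo intro 2} (using finiteness of the measure and the impossibility of swapping the two ends of $U$), and that the equivalences of Proposition~\ref{prop intro 1} and Lemma~\ref{equiv overconj et conj} are genuinely statements about individual orbits, so that restricting the ambient annulus to $U$ changes nothing. The positive/negative twist reduction at the outset is routine but should be recorded.
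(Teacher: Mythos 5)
Your proof is correct and takes essentially the same route the paper intends: Proposition~\ref{prop intro 1} (applied to the finite-measure invariant set $U$) gives the equivalence between zero torsion almost everywhere and the absence of over-conjugate points, Lemma~\ref{equiv overconj et conj} translates this into the absence of conjugate points, and the bounded-subannulus version of Theorem~\ref{teo intro 2} from Remark~\ref{adapt bounded annulus} converts that into $\mathcal{C}^0$-integrability. The two small bridges you make explicit — the reduction from a negative to a positive twist map, and the verification that on a bounded invariant essential subannulus measure preservation forces the zero-flux hypothesis actually used in Proposition~\ref{prop graph rational} — are exactly the points the paper leaves implicit, and you handle both correctly.
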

\indent Denote as $\mathscr{I}(f)$ the union of all $f$-invariant essential curves in $\A$. A bounded connected component of $\A\setminus\mathscr{I}(f)$ is either an $f$-invariant essential subannulus or a $f$-periodic disk, see \cite{mc_greenbundles}. Thus, from Proposition \ref{prop intro 1} and Theorems \ref{teo intro 2} and \ref{teo intro 3}, we deduce the following outcome.
\begin{corollario}
	Let $f:\A\rightarrow\A$ be a positive twist map that preserves the Lebesgue measure. Then every bounded connected component $U$ of $\A\setminus\mathscr{I}(f)$ contains a set of positive measure of points with non-zero torsion. In particular, it holds
	$$
	\displaystyle{\int_{\bigcup_{i\in\Z}f^i(U)}}\text{\emph{Torsion}}(f,z)\, d\text{\emph{Leb}}(z)<0.
	$$
\end{corollario}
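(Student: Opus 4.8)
The plan is to combine the structure theorem for $\A\mysetminus\mathscr{I}(f)$ with Theorems \ref{teo intro 3} and \ref{teo intro 2} and with Proposition \ref{prop intro 1}. First I would recall from \cite{mc_greenbundles} that the bounded connected component $U$ is either an $f$-invariant essential subannulus or an $f$-periodic topological disk; in both cases $U$ is bounded, hence of finite Lebesgue measure, and by definition of $\mathscr{I}(f)$ one has $U\cap\mathscr{I}(f)=\emptyset$, so $U$ contains no $f$-invariant essential curve.

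Next I would split into the two cases. If $U$ is an $f$-periodic topological disk, then it is a non-essential $f$-periodic open set of finite measure, and Theorem \ref{teo intro 3} immediately produces a dense, positive-measure set of points of $U$ of negative asymptotic torsion. If $U$ is an $f$-invariant bounded essential subannulus, then, since $U$ contains no $f$-invariant essential curve, $f_{\vert U}$ admits no partition into essential $f$-invariant curves, i.e.\ $f_{\vert U}$ is not $\mathcal{C}^0$-integrable; by Theorem \ref{teo intro 2} in its bounded-annulus form (Remark \ref{adapt bounded annulus}), $f_{\vert U}$ then has a point with conjugate points, hence by Lemma \ref{equiv overconj et conj} a point with over-conjugate points, and Proposition \ref{prop intro 1} applied to $U$ rules out its condition $(ii)$. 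Since $f$ is a positive twist map we have $\text{Torsion}_n(f,z,v)\in(-\tfrac{1}{2},0)$ for every $n$, so $\text{Torsion}(f,z)\le 0$ wherever it is defined; hence the failure of condition $(ii)$ means that the set of $z\in U$ with negative asymptotic torsion has positive Lebesgue measure. In both cases $U$ therefore carries a positive-measure set of points of negative torsion.

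For the displayed strict inequality I would set $V=\bigcup_{i\in\Z}f^i(U)$. In the disk case this is a finite union of bounded topological disks, and in the subannulus case $V=U$, so $V$ is in either case a bounded $f$-invariant open set of finite measure. Hence $\tfrac{1}{\text{Leb}(V)}\,\text{Leb}_{\vert V}$ is an $f$-invariant Borel probability measure with compact support, and by the discussion in the introduction $\text{Torsion}(f,z)$ is defined for $\text{Leb}$-a.e.\ $z\in V$ and satisfies $\text{Torsion}(f,z)\le 0$ there. Since the integrand is bounded and $\text{Leb}(V)<\infty$, the integral $\int_V\text{Torsion}(f,z)\,d\text{Leb}(z)$ is finite and $\le 0$, with equality if and only if $\text{Torsion}(f,\cdot)$ vanishes $\text{Leb}$-a.e.\ on $V$. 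As $U\subset V$ carries a positive-measure set of points of negative torsion, the inequality is strict.

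I expect the argument to be essentially one of packaging: the substance lies in Theorems \ref{teo intro 3} and \ref{teo intro 2} and in Proposition \ref{prop intro 1}. The only points deserving a little care are verifying that $V$ still has finite measure (so that the torsion is a.e.\ defined on it and the integral is meaningful), checking that the over-conjugate point produced by Lemma \ref{equiv overconj et conj} can be taken inside the $f$-invariant set $U$, and noting that the positive-measure set of non-zero-torsion points survives the passage from $U$ to the larger invariant set $V$ — the latter being immediate from $U\subset V$ together with the sign constraint $\text{Torsion}\le 0$ valid for positive twist maps.
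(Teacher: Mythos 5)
Your proof is correct and follows exactly the route the paper intends when it writes ``from Proposition \ref{prop intro 1} and Theorems \ref{teo intro 2} and \ref{teo intro 3}, we deduce the following outcome'': split by the dichotomy of \cite{mc_greenbundles} (periodic non-essential disk versus invariant bounded essential subannulus), apply Theorem \ref{teo intro 3} in the first case and Theorem \ref{teo intro 2} together with Remark \ref{adapt bounded annulus}, Lemma \ref{equiv overconj et conj}, and Proposition \ref{prop intro 1} in the second, and then use the sign constraint $\text{Torsion}\le 0$ for positive twist maps, plus the finite measure and a.e.\ definedness of the torsion on the bounded invariant set $V=\bigcup_i f^i(U)$, to obtain the strict integral inequality. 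The only small points you flag — that $V$ has finite measure and that the torsion is a.e.\ defined on it — are exactly the ones worth checking, and both hold since $U$ is bounded and $f$-periodic, so $V$ is a finite union of images of a relatively compact set.
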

%\begin{remark}
%	Adapted results hold also for negative twist maps of the annulus that preserve the Lebesgue measure.
%\end{remark}

\section{Torsion and over-conjugate points: proof of Proposition \ref{prop intro 1}}

\indent The implications $(iii)\Rightarrow (i)$ and $(i)\Rightarrow (ii)$ of Proposition \ref{prop intro 1} are obvious. The implication $(ii)\Rightarrow (iii)$ is an immediate consequence of the following result.%We prove here the only non trivial implication of Proposition \ref{prop intro 1}. More precisely, we show the following.
\begin{proposizione}\label{prop 2.1}
	Let $f:\A\rightarrow\A$ be a positive twist map that preserves the Lebesgue measure. Let $z_0\in\A$ be such that\vspace{5pt}
	
	\begin{itemize}
		\item[$(i)$] $z_0$ has over-conjugate points;\vspace{5pt}
		
		\item[$(ii)$] $z_0$ has a neighborhood $U$ such that $\text{\emph{Leb}}(\bigcup_{n\in\Z}f^n(U))<+\infty$.\vspace{5pt}
		
	\end{itemize}
Then there exists an open neighborhood $W\subset U$ of $z_0$ such that for $\text{\emph{Leb}}$-almost every $z\in W$ it holds that $\text{\emph{Torsion}}(f,z)<0$.
\end{proposizione}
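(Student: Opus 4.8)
The plan is to combine the twist‑induced monotonicity of the angular cocycle $N\mapsto\int_{DI^N(z,v)}d\theta$ with Poincaré recurrence on the invariant set of finite measure $V:=\bigcup_{k\in\Z}f^k(U)$. First I would use assumption $(i)$ and Definition \ref{conj pts} to fix an integer $n\in\N^*$ with $\int_{DI^n(z_0,v)}d\theta<-\tfrac12$. By \eqref{def torsion finite} together with the continuity on $T^1\A$ of $(z,w)\mapsto\int_{DI^1(z,w)}d\theta$ and of $z\mapsto Df^i(z,v)$, the function $z\mapsto c_n(z):=\int_{DI^n(z,v)}d\theta$ is continuous, so there are an open set $W$ with $z_0\in W\subset U$ and a number $\delta>0$ such that $c_n(z)<-\tfrac12-\delta$ for every $z\in W$. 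By assumption $(ii)$, $V$ is $f$-invariant, open, and $0<\text{Leb}(V)<+\infty$; hence $\mu:=\text{Leb}|_V/\text{Leb}(V)$ is an $f$-invariant Borel probability measure, and since $W\subset V$ it suffices to prove that $\text{Torsion}(f,z)<0$ for $\mu$-a.e. $z\in W$.

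The core of the argument is a ``drop'' estimate. Write $c_N(z)=\int_{DI^N(z,v)}d\theta$, so $c_0\equiv 0$. The cocycle property over $Df$ recalled above yields, for all $m,N\in\N$,
\begin{equation*}
c_{m+N}(z)=c_m(z)+\int_{DI^{N}\bigl(f^m(z),\,Df^m(z)v\bigr)}d\theta .
\end{equation*}
By \eqref{def torsion finite}, $c_{N+1}(z)-c_N(z)=\text{Torsion}_1(f,Df^N(z,v))$, and because $f$ is a positive twist map this increment lies in $(-\tfrac12,0)$ --- the sharp form, established in \cite{Flo19}, of the fact recalled before Definition \ref{conj pts} that $\text{Torsion}_N(f,z,v)\in(-\tfrac12,0)$; in particular $N\mapsto c_N(z)$ is strictly decreasing. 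Combining the displayed identity for $N=n$, the a priori bound \eqref{eq check control} applied to the half-lines $Df^m(z)v$ and $v$ at the point $f^m(z)$, and the definition of $W$, one gets: whenever $f^m(z)\in W$,
\begin{equation*}
c_{m+n}(z)<c_m(z)+c_n(f^m(z))+\tfrac12<c_m(z)-\delta .
\end{equation*}
Thus each visit of the $f$-orbit of $z$ to $W$ lowers $c$ by at least $\delta$ over the next $n$ iterates, while $c$ is non-increasing along every iterate.

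To conclude, I would apply Birkhoff's ergodic theorem to $(f,\mu)$ and the characteristic function $\mathbbm{1}_W$: for $\mu$-a.e. $z$ the frequency $\varphi(z):=\lim_{N\to+\infty}\tfrac1N\#\{0\le i<N:\ f^i(z)\in W\}$ exists; since the invariant set $\{\varphi=0\}$ satisfies $\int_{\{\varphi=0\}}\mathbbm{1}_W\,d\mu=\int_{\{\varphi=0\}}\varphi\,d\mu=0$, in fact $\varphi(z)>0$ for $\mu$-a.e. $z\in W$. Moreover, $\text{Torsion}(f,z)$ is defined for $\mu$-a.e. $z$: the construction recalled before Definition \ref{conj pts} of a $Df$-invariant probability on $T^1\A$ projecting onto $\mu$ uses only tightness of $\mu$ and compactness of the fibres of $T^1\A\to\A$, so it applies even though $\mathrm{supp}\,\mu$ need not be compact. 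Fix now $z\in W$ with $\varphi(z)>0$ at which $\text{Torsion}(f,z)$ exists. For $N$ large put $K_N:=\#\{0\le i\le N-n:\ f^i(z)\in W\}$ and select greedily an $n$-separated subfamily $m_1<\cdots<m_L$ among these return times (so $m_{j+1}\ge m_j+n$ and $m_L+n\le N$), for which $L\ge K_N/n$. Using the drop estimate at $m_1,\dots,m_L$ and the non-increase of $c$ on the complementary blocks,
\begin{equation*}
c_N(z)\le c_{m_L+n}(z)\le c_{m_L}(z)-\delta\le\cdots\le c_{m_1}(z)-L\delta\le-L\delta\le-\tfrac{\delta}{n}K_N .
\end{equation*}
Dividing by $N$ and letting $N\to+\infty$ (so $K_N/N\to\varphi(z)$) gives $\text{Torsion}(f,z)\le-\tfrac{\delta}{n}\varphi(z)<0$, and the proposition follows.

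The step I expect to be the real obstacle is the monotonicity of $N\mapsto\int_{DI^N(z,v)}d\theta$: it is the only place where the twist hypothesis is used in an essential way, and it is precisely what prevents the error terms $+\tfrac12$ picked up from \eqref{eq check control} between consecutive returns from overwhelming the drops of size $\delta$. The other ingredients --- continuity, the cocycle identity, Poincaré recurrence, and the two applications of Birkhoff's theorem --- are routine.
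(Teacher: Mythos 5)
Your set-up, the reduction to a probability measure on $V$, and the Birkhoff-on-$\mathbbm{1}_W$ scheme are all fine, but there is a genuine gap at precisely the step you yourself singled out: the claim that $c_{N+1}(z)-c_N(z)=\text{Torsion}_1\bigl(f,Df^N(z,v)\bigr)$ lies in $(-\tfrac12,0)$ and hence that $N\mapsto c_N(z)$ is non-increasing. The fact you cite --- that $\text{Torsion}_N(f,z,v)\in(-\tfrac12,0)$ --- concerns the \emph{vertical} half-line $v$, whereas $Df^N(z)v$ is an arbitrary direction; for a general $(z,w)\in T^1\A$ the only a priori bound for a positive twist map is the one the paper records in its own proof of this proposition, namely $\text{Torsion}_1(f,z,w)\in(-1,\tfrac12)$, and positive values do occur. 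For instance, if the differential at some point is $\begin{pmatrix}2&1\\1&1\end{pmatrix}$ (area-preserving, $(1,2)$-entry positive), the half-line through $(-1,1)$ is sent to the one through $(-1,0)$, so the angle $\prec(v,\cdot)$ increases from $\tfrac18$ to $\tfrac14$ and $\text{Torsion}_1=+\tfrac18>0$. So $N\mapsto c_N(z)$ is in general \emph{not} monotone.

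This breaks your final chain $c_N(z)\le c_{m_L+n}(z)\le\cdots\le c_{m_1}(z)-L\delta$: between two selected visits the cocycle can go up, nothing in your argument bounds those gains, and the accumulated $L\delta$ of drops can be cancelled. The paper avoids this by proving the weaker --- and correct --- monotonicity statement that $c_n(z)<-\tfrac12$ for all $n\ge m$ and $z\in W$ (an induction using a direction-aware sharpening of \eqref{eq check control}), deducing via \eqref{eq check control} that $\int_{DI^n(z,w)}d\theta<0$ for every $(z,w)\in T^1W$ and $n\ge m$, and then working with the \emph{first return map} of $g=f^m$ to $W$: the per-return quantity $\phi(z,w)=\int_{DJ^{\tau(z)}(z,w)}d\theta$ automatically absorbs all the iterates between consecutive returns, is strictly negative because the $f$-time between returns is at least $m$, and is integrable by Kac's Lemma, so Birkhoff applied to $\phi$ and $\tau$ yields a strictly negative asymptotic torsion. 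Your greedy-count argument would need an analogous induced-map device to control the in-between blocks; as written it does not close.
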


\begin{proof}
	By the definition of points having over-conjugate points, there exists $m\in\N^*$ such that $\int_{DI^m(z_0,v)}d\theta<-\frac{1}{2}$. By the continuity of finite-time torsion, there exists a neighborhood $W\subset U$ of $z_0$ such that for every $z\in W$ it holds that $\int_{DI^m(z,v)}d\theta<-\frac{1}{2}$.
	
	For every $z\in W$ and for every $n>m$, we have that  $\int_{DI^n(z,v)}d\theta<-\frac{1}{2}$. This fact can be shown by induction. Indeed, for $n=m+1$, since $Df(z)$ preserves the orientation and since $\int_{DI(f^m(z),v)}d\theta<0$, we deduce that, if $\int_{DI^m(z,v)}d\theta <-\frac 1 2$, then
	$$
	\int_{DI^{m+1}(z,v)}d\theta \leq -\dfrac 1 2 +\int_{DI(f^m(z),v)}d\theta<-\dfrac 1 2.
	$$
	We argue similarly for $n>m+1$. Actually, it holds that
	$$
	\displaystyle{\int_{DI^n(z,v)}}d\theta=\displaystyle{\int_{DI^m(z,v)}}d\theta+\displaystyle{\int_{DI^{n-m}(Df^m(z,v))}}d\theta\leq -\dfrac{k}{2}+\displaystyle{\int_{DI^{n-m}(f^m(z),v)}}d\theta<-\dfrac{1}{2},
	$$
	where $k\in\N, k>0$ is the integer part of $-2\int_{DI^m(z,v)}d\theta$.
	
	 %This follows from the fact that $Df^{n-m}$ preserves the orientation and that $\int_{DI^{n-m}(f^m(z),v)}d\theta<0$. %(see Section 2.5 in \cite{FloPHD}). %the image through $Df^{n-m}(f^m(z))$ of representative vectors of the classes $v, Df^m(z)v,-v$ are ordered in the same way as $v,Df^m(z)v,-v$ because $Df^{n-m}(f^m(z))$ preserves the orientation. By the twist property it holds that $\int_{DI^{n-m}(z,v)}d\theta<0$. Consequently we have $\int_{DI^n(z,v)}d\theta<-\frac{1}{2}$ for every $n\geq m$. 
	Thus, by \eqref{eq check control}, for every $(z,w)\in T^1W$ and for every $n\geq m$ it holds that
	\begin{equation}\label{strict neg}
	\displaystyle{\int_{DI^n(z,w)}}d\theta <0.
	\end{equation}
	There exists a finite measure $\widetilde{\text{Leb}}$ defined on $T^1(\cup_{n\in\Z}f^n(U))$ that projects onto the restriction of $\text{Leb}$ to $\bigcup_{n\in\Z}f^n(U)$. Indeed, the argument given in the introduction can be extended to the case of an open set of finite measure: we can either consider the Alexandroff compactification of $T^1(\bigcup_{n\in\Z}f^n(U))$, or we can use Prohorov's Theorem (see Proposition 2.4.1 in \cite{ViaOli}).
	
	Since $f$ is a positive twist map, one has
	$$
	-1<\text{Torsion}_1(f,z,w)<\dfrac{1}{2}
	$$
	for every $(z,w)\in T^1\A$. By Birkhoff's Ergodic Theorem, $\lim_{n\rightarrow+\infty}\text{Torsion}_n(f,z,w)$ is defined $\widetilde{\text{Leb}}$-almost everywhere on $T^1(\bigcup_{n\in\Z}f^n(U))$ and so $\text{Torsion}(f,z)$ is defined $\text{Leb}$-almost everywhere on $\bigcup_{n\in\Z}f^n(U)$.
	
	We want to prove that $\text{Torsion}(f,z)<0$ for $\text{Leb}$-almost every point in $W$. Set $g=f^m$ and $J=I^m$. We will prove that $\text{Torsion}(g,z)$ is well-defined and negative for $\text{Leb}$-almost every $z\in W$, which will imply the proposition because $\text{Torsion}(f,z)=\text{Torsion}(g,z)/m$.
	
	 Let $\psi:z\mapsto g^{\tau(z)}(z)$ be the first return map of $g$ on $W$, where $\tau:W\rightarrow\N^*$ is the first return time map. These maps are defined $\text{Leb}$-almost everywhere on $W$. The map $D\psi:(z,w)\mapsto (g^{\tau(z)}(z),Dg^{\tau(z)}(z)w)$ is defined $\widetilde{\text{Leb}}$-almost everywhere on $T^1W$. We know that
	\begin{equation}\label{bbound}
	-m\tau(z)<\displaystyle{\int_{DJ^{\tau(z)}(z,w)}}d\theta=\sum_{i=0}^{\tau(z)-1}\displaystyle{\int_{DJ(Dg^i(z,w))}}d\theta<0,
	\end{equation}
	the right inequality occuring because $\int_{DI^n(z,w)}d\theta<0$ if $n\geq m$ and $DJ^{\tau(z)}(z,w)=DI^{m\tau(z)}(z,w)$ (see \eqref{strict neg}) and the left inequality occuring because $\text{Torsion}_1(f,z,w)>-1$ for every $(z,w)\in T^1\A$. By Kac's Lemma we know that $\tau$ is $\text{Leb}$-integrable and so that $\phi:(z,w)\mapsto \int_{DJ^{\tau(z)}(z,w)}d\theta$ is integrable and negative. By Birkhoff's Ergodic Theorem, we deduce that for $\widetilde{\text{Leb}}$-almost every $(z,w)\in T^1W$, one has
	\begin{equation}\label{eq bar phi}
	\lim_{n\rightarrow+\infty}\dfrac{1}{n}\sum_{i=0}^{n-1}\phi(D\psi^i(z,w))=\bar{\phi}(z,w)\in(-\infty,0)
	\end{equation}
	and
	\begin{equation}\label{eq bar tau}
	\lim_{n\rightarrow+\infty}\dfrac{1}{n}\sum_{i=0}^{n-1}\tau(\psi^i(z))=\bar{\tau}(z)\in[1,\infty).
	\end{equation}
	Recall that the fact that $\bar{\phi}(z,w)<0$ in \eqref{eq bar phi} is a classical application of Birkhoff's Ergodic Theorem. Consequently, from \eqref{bbound},  \eqref{eq bar phi} and \eqref{eq bar tau}, for $\widetilde{\text{Leb}}$-almost every $(z,w)\in T^1W$, it holds that $\bar{\phi}(z,w)/\bar{\tau}(z)\in(-\infty,0)$. Note now that
	$$
	\sum_{i=0}^{n-1}\phi(D\psi^i(z,w))/\sum_{i=0}^{n-1}\tau(\psi^i(z))=\text{Torsion}_{N_n}(g,z,w)
	$$
	where $N_n=\sum_{i=0}^{n-1}\tau(\psi^i(z))$. So $\text{Torsion}(g,z)=\lim_{n\rightarrow+\infty}\text{Torsion}_{N_n}(g,z,w)<0$ for $\text{Leb}$-almost every point in $W$.\\
\end{proof}

\section{Density of over-conjugate points in non-essential periodic bounded open sets: proof of Theorem \ref{teo intro 3}}\label{sect 3}

\indent In the introduction, we give the definition of \emph{conjugate} and \emph{over-conjugate} points. Let us show that the existence of the first ones is equivalent to the existence of the second ones.
\begin{lemma}\label{equiv overconj et conj}
	Let $f$ be a positive twist map. There exists a point with conjugate points if and only if there exists a point with over-conjugate points.
\end{lemma}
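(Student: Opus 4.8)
The statement splits into two implications, which I would treat separately.

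\emph{A point with conjugate points yields a point with over-conjugate points.} Suppose $z$ has conjugate points, say $\int_{DI^{n}(z,v)}d\theta=-\tfrac{k}{2}$ with $n,k\in\N^{*}$. Since this integral is a lift of $\theta(Df^{n}(z)v)-\theta(v)$ and $-\tfrac{k}{2}\in\tfrac12\Z$, the vector $Df^{n}(z)v$ is vertical, i.e.\ $Df^{n}(z)v=\pm\lambda v$ with $\lambda>0$. The paths $DI(f^{n}(z),v)$ and $DI(f^{n}(z),-v)$ differ by the fibrewise involution of $T^{1}\A$ sending a half-line to its opposite, under which $\theta$ changes by the constant $\tfrac12$ and hence $d\theta$ is unchanged; therefore $\int_{DI(f^{n}(z),Df^{n}(z)v)}d\theta=\int_{DI(f^{n}(z),v)}d\theta=\text{Torsion}_{1}(f,f^{n}(z),v)\in(-\tfrac12,0)$. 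By the cocycle property, $\int_{DI^{n+1}(z,v)}d\theta=-\tfrac{k}{2}+\text{Torsion}_{1}(f,f^{n}(z),v)<-\tfrac{k}{2}\le-\tfrac12$, so $z$ has over-conjugate points.

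\emph{The converse, in contrapositive form: if no point has conjugate points, no point has over-conjugate points.} For $n\in\N^{*}$ let $b_{n}\colon\A\to\R$ be the first coordinate of $Df^{n}(z)v$ in the standard trivialisation; it is continuous, and $b_{n}(\pi(x,y))=\partial_{y}(p_{1}\circ F^{n})(x,y)$ for a lift $F$ of $f$. Since $b_{n}(z)=0$ exactly when $Df^{n}(z)v$ is vertical, i.e.\ when $\int_{DI^{n}(z,v)}d\theta\in\tfrac12\Z$, and since this integral is always $<0$ (as $\text{Torsion}_{n}(f,z,v)\in(-\tfrac12,0)$), the vanishing of some $b_{n}$ at some point is equivalent to the existence of a point with conjugate points. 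Under our assumption every $b_{n}$ is nowhere zero, hence of constant sign on the connected annulus $\A$, with $b_{1}>0$ by the twist hypothesis. I would then show by induction on $n$ that $b_{n}>0$ everywhere and $\int_{DI^{n}(z,v)}d\theta\in(-\tfrac12,0)$ for all $z$ — the latter, for every $n$, being precisely the absence of over-conjugate points. The case $n=1$ is the twist hypothesis together with $\text{Torsion}_{1}(f,z,v)\in(-\tfrac12,0)$. For the step, assume $\int_{DI^{n-1}(z,v)}d\theta\in(-\tfrac12,0)$ for all $z$; fix $z$, set $z'=f^{n-1}(z)$, $w=Df^{n-1}(z)v$, so $\prec(w,v)=-\theta(w)\in(0,\tfrac12)$. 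Applying the order-preservation property of positive twist maps to the non-colinear pair $(w,v)$ at $z'$ along an isotopy, and following the continuous lifts $a$ of $\theta(Df_{t}(z')v)$ with $a(0)=0$ and $b$ of $\theta(Df_{t}(z')w)$ with $b(0)=\int_{DI^{n-1}(z,v)}d\theta$, one gets $a(t)-b(t)\in(0,\tfrac12)$ for all $t$; as the final elementary factor of $DI^{n}(z,v)$ is the path $t\mapsto Df_{t}(z')w$, we have $b(1)=\int_{DI^{n}(z,v)}d\theta$, whence $\int_{DI^{n}(z,v)}d\theta\in\bigl(\text{Torsion}_{1}(f,z',v)-\tfrac12,\ \text{Torsion}_{1}(f,z',v)\bigr)\subset(-1,0)$. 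If $b_{n}>0$ everywhere, then $\int_{DI^{n}(z,v)}d\theta\equiv\theta(Df^{n}(z)v)\in(-\tfrac12,0)\pmod1$, and being in $(-1,0)$ it lies in $(-\tfrac12,0)$, closing the induction. If instead $b_{n}<0$ everywhere, the same congruence forces $\int_{DI^{n}(z,v)}d\theta\in(-1,-\tfrac12)$ for all $z$, whereas $b_{n}<0$ everywhere means $f^{n}$ twists negatively, so that $\int_{DI^{n}(z,v)}d\theta=\text{Torsion}_{1}(f^{n},z,v)$ ought to lie in $(0,\tfrac12)$ — a contradiction, ruling out a negative $b_{n}$.

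The delicate point, and the one I expect to be the main obstacle, is this last step: deducing from ``$\partial_{y}(p_{1}\circ F^{n})<0$ everywhere'' that $\text{Torsion}_{1}(f^{n},\,\cdot\,,v)\in(0,\tfrac12)$ — equivalently, that $f^{n}$ is genuinely a negative twist map (one must also check that $y\mapsto p_{1}\circ F^{n}(x,y)$ is onto $\R$), or, more robustly, that the winding of $t\mapsto Df_{t}^{n}(z)v$ along the full isotopy is the expected one rather than off by a full turn. I would handle it by building, from an isotopy realising the twist condition for $f$, an isotopy from $\text{Id}$ to $f^{n}$ along which $\partial_{y}(p_{1}\circ(\,\cdot\,))$ keeps the sign of $b_{n}$; along such an isotopy $\theta(D(\,\cdot\,)(z)v)$ stays in the open half-arc prescribed by that sign, so the winding cannot cross a half-integer and is therefore pinned down by its value $0$ at $\text{Id}$.
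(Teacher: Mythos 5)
Your first implication coincides with the paper's: from $\int_{DI^{n}(z,v)}d\theta=-\tfrac{k}{2}$ you deduce $Df^{n}(z)v=\pm v$ in $T^{1}\A$, then add one more step and use $\mathrm{Torsion}_{1}(f,\cdot,v)\in(-\tfrac12,0)$ to land strictly below $-\tfrac12$. Your remark that $d\theta$ is invariant under the fibrewise antipodal involution, so that the sign of the vertical vector is irrelevant, is a clean way to justify the step $\int_{DI(f^{n}(z),Df^{n}(z)v)}d\theta\in(-\tfrac12,0)$ that the paper leaves implicit. So far, same argument.

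For the converse you take a genuinely different route, and it has a real gap. The paper does not argue by contrapositive induction. It invokes a result from the torsion/linking-number literature (\cite{Flo19}, \cite{Flo20}): for \emph{every} $n\in\N^{*}$, and without any hypothesis about conjugate points, there exists a point $z=z(n)$ with $\int_{DI^{n}(z,v)}d\theta\in(-\tfrac12,0)$. Once you have such a point, the Intermediate Value Theorem on the connected annulus immediately yields a point where $\int_{DI^{n}(\cdot,v)}d\theta=-\tfrac12$ whenever there is a point where it is $<-\tfrac12$. Your contrapositive induction would, if completed, reprove exactly that fact (namely $\int_{DI^{n}(z,v)}d\theta\in(-\tfrac12,0)$ for all $z$, under the no-conjugate-points hypothesis), and this is precisely where the argument stops being rigorous. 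You correctly reduce the inductive step to ruling out the alternative ``$b_{n}<0$ everywhere, equivalently $\int_{DI^{n}(z,v)}d\theta\in(-1,-\tfrac12)$ for all $z$.'' But your proposed rejection of this alternative is not established. Saying that $b_{n}<0$ everywhere makes $f^{n}$ a negative twist map requires $y\mapsto p_{1}\circ F^{n}(x,y)$ to be a \emph{surjective} decreasing map of $\R$; a nowhere-zero derivative only gives a diffeomorphism onto an open interval, and there is no evident reason for it to be onto (notice that the paper itself, in its later lemma that $f^{n}$ is a positive twist map, has to prove the analogous surjectivity separately, \emph{after} already having Lemma~\ref{equiv overconj et conj} in hand — so you cannot simply borrow that lemma without circularity). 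The ``more robust'' variant you sketch — constructing an isotopy from $\mathrm{Id}$ to $f^{n}$ along which $\partial_{y}(p_{1}\circ\cdot)$ keeps the sign of $b_{n}$ — is exactly the missing step: the natural isotopy $I^{n}$ passes through $f,f^{2},\dots$, along which this derivative does not keep a single sign when $b_{n}<0$, and you give no construction of an alternative isotopy (and the obvious candidates, such as Alexander-type tricks, do not respect the lifted annulus structure). Since you explicitly flag this as ``the main obstacle'' without resolving it, the converse implication is incomplete. To close it along your lines you would either have to supply the linking-number input that the paper uses, or prove the sign-definite-isotopy claim — either of which is essentially the content you are trying to avoid importing.
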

\begin{proof}
 On one hand, the first implication is an outcome of the twist property. Indeed, if there exists $z\in\A$ such that $\int_{DI^n(z,v)}d\theta=-\frac{k}{2}$ for some $n,k\in\N^*$, then $Df^n(z)v=\pm v$ and so $\int_{DI(f^n(z),Df^n(z)v)}d\theta\in(-\frac{1}{2},0)$. Consequently, one has
$$
\int_{DI^{n+1}(z,v)}d\theta=\int_{DI^n(z,v)}d\theta+\int_{DI(f^n(z),Df^n(z)v)}d\theta<-\dfrac{k}{2}\leq -\dfrac{1}{2}.
$$

On the other hand, it can be proved that, for every $n\in\N^*$, there exists $z\in\A$ such that $\int_{DI^n(z,v)}d\theta\in(-\frac{1}{2},0)$, see \cite{Flo20}. More precisely, selecting a lift $F$ of $f$, the linking number at any finite time $n\in\N^*$ of the couple of points $(x,r_0),(x+1,r_0)$, for some $x\in\R$, $r_0\in\R$, is zero. Thanks to the relation between torsion and linking number shown in \cite{Flo19}, we deduce that, for any $n\in\N^*$ there exists $z=z(n)\in\T\times\{r_0\}$ such that $\int_{DI^n(z,u)}d\theta=0$, where $u$ is the class of the horizontal vector $u=(1,0)$. By \eqref{eq check control} and since $f$ is a positive twist map, we so have that $\int_{DI^n(z,v)}d\theta\in(-\frac{1}{2},0)$. By this fact and by the Intermediate Value Theorem, we conclude that the existence of a point with over-conjugate points implies the existence of a point with conjugate points.
\end{proof}

The equivalence between the existence of points with conjugate points and with over-conjugate points still holds for composition of positive twist maps and even to the restriction of the dynamics to a $f$-invariant essential subannulus.

\indent This section is devoted to the proof of Theorem \ref{teo intro 3}.

\begin{lemma}\label{lemma first sec 3}
	Let $f$ be a positive twist map, $F$ a lift of $f$ to $\R^2$ and $K\subset\A$ a compact set that does not contain any point with over-conjugate points. Then there exists $\beta\in(0,\frac{1}{2})$ so that for every $z,z'\in\pi^{-1}(K)$ satisfying\vspace{3pt}
	
	\begin{itemize}
	\item[$(i)$] the angle $\prec(v,z'-z)$ belongs to $[0,\beta]+\Z$,\vspace{3pt}
		
	\item[$(ii)$] the segment joining $z$ to $z'$ is contained in $\pi^{-1}(K)$,\vspace{3pt}
		
	\end{itemize}
we have that for every $n\geq 1$
$$
p_1\circ F^n(z')-p_1\circ F^n(z)>0.
$$
\end{lemma}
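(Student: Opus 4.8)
The plan is the following. First extract uniform constants: by compactness of $K$ and $\Z$-periodicity of $\pi^{-1}(K)$ there are $0<c\le M$ with $\partial_y(p_1\circ F)\ge c$ and $\lvert\partial_x(p_1\circ F)\rvert\le M$ on $\pi^{-1}(K)$, the first inequality using that $f$ is a positive twist map. Fix $\beta\in(0,\tfrac14)$ with $\tan(2\pi\beta)<c/M$; it may be shrunk further later. Next record the structural consequence of the hypothesis on $K$: no point of $K$ has conjugate points either, since if some $z\in K$ satisfied $\int_{DI^{n}(z,v)}d\theta=-k/2$ for some $n,k\ge1$, then $DF^{n}(z)v=\pm v$, hence $\int_{DI(f^{n}(z),DF^{n}(z)v)}d\theta\in(-\tfrac12,0)$ and $\int_{DI^{n+1}(z,v)}d\theta<-k/2\le-\tfrac12$, contradicting that $z$ is not over-conjugate (this is the argument in the proof of Lemma \ref{equiv overconj et conj}). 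Together with $\int_{DI^{n}(z,v)}d\theta<0$ this gives $\int_{DI^{n}(z,v)}d\theta\in(-\tfrac12,0)$, i.e. $\prec(v,DF^{n}(z)v)\in(-\tfrac12,0)$ and in particular $p_1\big(DF^{n}(z)v\big)>0$, for every $z\in\pi^{-1}(K)$ and every $n\ge1$: on $\pi^{-1}(K)$ the function $p_1\circ F^{n}$ is strictly increasing in the vertical direction, for all $n\ge1$.

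Now fix $z,z'$ as in the statement, set $w=z'-z=(w_1,w_2)$, $\zeta_t=z+tw$ for $t\in[0,1]$, and let $\Gamma_n=F^{n}([z,z'])$, so that the tangent vector of $\Gamma_n$ at $F^{n}(\zeta_t)$ is $DF^{n}(\zeta_t)w$. Hypothesis (i) and $\beta<\tfrac14$ give $w_2>0$ and $\lvert w_1\rvert\le w_2\tan(2\pi\beta)$. It suffices — and is what the induction below aims for — to prove that for every $n\ge1$ the curve $\Gamma_n$ is a graph over the first coordinate along which $p_1$ increases strictly from the endpoint $F^{n}(z)$ to the endpoint $F^{n}(z')$; equivalently, that $p_1\big(DF^{n}(\zeta_t)w\big)>0$ for all $t\in[0,1]$. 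Indeed, by hypothesis (ii) this yields
$$
p_1\circ F^{n}(z')-p_1\circ F^{n}(z)=\int_0^1 p_1\big(DF^{n}(\zeta_t)w\big)\,dt>0 .
$$
For $n=1$ it holds by the twist estimate: $p_1\big(DF(\zeta_t)w\big)=w_1\,\partial_x(p_1 F)(\zeta_t)+w_2\,\partial_y(p_1 F)(\zeta_t)\ge w_2\big(c-\tan(2\pi\beta)M\big)>0$, since $\zeta_t\in\pi^{-1}(K)$.

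The step from time $n-1$ to time $n$ is the heart of the matter, and here the full force of hypotheses (i) and (ii) — the whole segment lying in $\pi^{-1}(K)$ and carrying no conjugate points — enters. Assume each $\Gamma_m$ with $1\le m\le n-1$ is a monotone graph over the first coordinate, and in fact that $p_1\big(DF^{m}(\zeta_t)w'\big)>0$ for $1\le m\le n-1$, all $t$, and every direction $w'$ with $\prec(v,w')\in[0,\beta]$. Suppose the conclusion fails at time $n$. Using that $DF$ is a positive twist map, so that the property ``$\prec(\,\cdot\,,\,\cdot\,)\in(-\tfrac12,0)$'' is preserved by the derivative cocycle, together with the structural fact that $DF^{m}(\zeta_t)v$ remains in the open right half-plane for all $m\ge1$, an orientation bookkeeping argument shows that then, for some $t$, there is a direction $d_{*}$ at $\zeta_t\in\pi^{-1}(K)$ with $\prec(v,d_{*})\in(0,\beta]$ such that $DF^{n}(\zeta_t)d_{*}$ is vertical, i.e. $DF^{n}(\zeta_t)^{-1}(v)$ lies in the $\beta$-cone about $v$. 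Excluding this is the technical core: one has to promote the qualitative statement ``$\prec(v,DF^{m}(\zeta)v)\in(-\tfrac12,0)$ for $\zeta\in K$, $m\ge1$'' to quantitative estimates — a uniform lower bound for $\lvert\prec(v,DF^{m}(\zeta)v)\rvert$ and a uniform bound on how much $DF^{m}(\zeta)$ can distort the arc of directions adjacent to $v$ — valid for all $\zeta\in K$, $m\ge1$, precisely because $K$ has no conjugate points, and then choose $\beta$ small enough to reach a contradiction.

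I expect these quantitative estimates, rather than the orientation bookkeeping, to be the main obstacle. An alternative packaging is to argue by contradiction at the first time $n$ at which the inequality fails and then pass to the limit along a sequence of would-be counterexamples with $\beta\to0$: the limit configuration is a vertical segment contained in $\pi^{-1}(K)$, for which the inequality is immediate from the vertical monotonicity of $p_1\circ F^{m}$ established in the first step; the only delicate case is that the failure times are unbounded, which is exactly the situation in which one needs the uniform control above.
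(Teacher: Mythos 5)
Your setup matches the paper: the uniform cone constant $\beta$ on the compact set $K$, the observation that $K$ also carries no conjugate points, the consequence $\prec\bigl(v,DF^{m}(\zeta)v\bigr)\in\bigl(-\tfrac12,0\bigr)+\Z$ for all $\zeta\in\pi^{-1}(K)$, $m\ge1$, and the reduction to showing $p_1\bigl(DF^{n}(\zeta_t)w\bigr)>0$ along the segment. But the inductive step, which you yourself flag as the unresolved ``technical core,'' is a genuine gap, and the route you sketch to close it is a dead end. You propose to extract a uniform lower bound on $\bigl|\prec\bigl(v,DF^{m}(\zeta)v\bigr)\bigr|$ and a uniform bound on the angular distortion of $DF^{m}$ near the vertical, valid for all $\zeta\in K$ and all $m\ge1$. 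No such uniform-in-$m$ bounds exist in general: along an orbit accumulating on, say, a fixed point with a nearly vertical eigendirection, $\prec\bigl(v,DF^{m}(\zeta)v\bigr)$ can tend to $0$ and the distortion of the arc near $v$ can be unbounded. So the compactness-plus-quantitative strategy cannot work as stated, and the compactness-via-$\beta\to0$ alternative you mention does not apply to this lemma, where $\beta$ is fixed once and for all.

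The missing idea is purely qualitative and requires no uniform estimates beyond the one-step cone constant. Strengthen the inductive hypothesis: for $n\ge1$, the direction $DF^{n}(\zeta_t)w$ lies in the \emph{open arc} of directions from $DF^{n}(\zeta_t)v$ to $v$, traversed counterclockwise. This arc is contained in the open right half-plane since $\prec\bigl(v,DF^{n}(\zeta_t)v\bigr)\in\bigl(-\tfrac12,0\bigr)+\Z$, so in particular it gives what you want. The base case $n=1$ follows from the one-step cone estimate together with orientation preservation ($\prec(v,w)\in(0,\beta]+\Z$ implies $\prec\bigl(DF(\zeta_t)v,DF(\zeta_t)w\bigr)\in\bigl(0,\tfrac12\bigr)+\Z$). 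For the step, apply the orientation-preserving linear map $DF\bigl(f^{n}(\zeta_t)\bigr)$ to the arc from $DF^{n}(\zeta_t)v$ to $v$. The image is the arc from $DF^{n+1}(\zeta_t)v$ to $DF\bigl(f^{n}(\zeta_t)\bigr)v$, traversed counterclockwise; it has angular length $<\tfrac12$ because a linear map sends antipodal pairs of directions to antipodal pairs, so an arc free of antipodal pairs maps to an arc free of antipodal pairs. Both endpoints of the image arc lie in the open right half-plane — the first by the no-over-conjugate hypothesis, the second by the positive twist property at $f^{n}(\zeta_t)$ — and since the arc has length $<\tfrac12$ and connects two points of the open right half-plane without wrapping, it is contained in the open right half-plane, indeed in the arc from $DF^{n+1}(\zeta_t)v$ counterclockwise to $v$. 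Since $DF^{n}(\zeta_t)w$ sits in the interior of the source arc, $DF^{n+1}(\zeta_t)w$ sits in the interior of the image arc, closing the induction. This is exactly the content of the paper's chain of angle inclusions $\prec\bigl(Df^{n}(z)v,Df^{n+1}(z)w\bigr)\in\bigl(-\tfrac12,0\bigr)+\Z$ and $\prec\bigl(Df^{n+1}(z)v,Df^{n+1}(z)w\bigr)\in\bigl(0,\tfrac12\bigr)+\Z$. In short: the obstruction you identified is removed by trapping the iterated tangent direction between the iterated vertical and the vertical itself, not by quantifying how far the iterated vertical stays from vertical.
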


\begin{proof}
The map $f$ being a positive twist map, we have $\prec(v,Df(z)v)\in\left(-\frac{1}{2},0\right)+\Z$, for every $z\in\A$. By compactness of $K$, there exists $\beta\in\left(0,\frac{1}{2}\right)$ such that for every $(z,w)\in T^1K$ such that $\prec(v,w)\in[0,\beta]+\Z$ it holds that $\prec(v,Df(z)w)\in\left(-\frac{1}{2},0\right)+\Z$. Moreover we know that $\prec(Df(z)v,Df(z)w)\in\left(0,\frac{1}{2}\right)+\Z$. We deduce that for every $n\geq 0$ we have $\prec(Df^n(z)v,Df^{n+1}(z)w)\in\left(-\frac{1}{2},0\right)+\Z$ and $\prec(Df^{n+1}(z)v,Df^{n+1}(z)w)\in\left(0,\frac{1}{2}\right)+\Z$. Since there are no points with over-conjugate points in $K$, we have $\prec(v,Df^{n+1}(z)v)\in\left(-\frac{1}{2},0\right)+\Z$ for every $n\geq 0$. Consequently we have $\prec(v,Df^{n+1}(z)w)\in\left(-\frac{1}{2},0\right)+\Z$ for every $n\geq 0$.
% In particular, the vectors $(DF(z)v,DF(z)w,v)$ are ordered in the direct sense. Since $DF^n$ preserves the orientation for every $n\geq 1$, also $(DF^{n+1}(z)v,DF^{n+1}(z)w,DF^n(F(z))v)$ are ordered in the direct sense. Since there are no points with over-conjugate points in $\pi^{-1}(K)$, we have that for every $n\geq 1$ the angle $\prec(v,DF^n(z)v)\in(-\frac{1}{2},0)+\Z$ and consequently also $\prec(v,DF^n(z)w)\in(-\frac{1}{2},0)+\Z$.\\

Let $z,z'\in\pi^{-1}(K)$ be such that $\prec(v,z'-z)\in[0,\beta]+\Z$ and the segment $\gamma:[0,1]\ni t\mapsto (1-t)z+tz'$ is contained in $\pi^{-1}(K)$. Consequently, for every $t$ and for every $n\geq 1$ $\prec(v,(F^n\circ\gamma)'(t))\in(-\frac{1}{2},0)+\Z$. So for every $n\geq 1$ it holds that $$p_1\circ F^n(\gamma(1))-p_1\circ F^n(\gamma(0))=p_1\circ F^n(z')-p_1\circ F^n(z)>0.$$
\end{proof}

\noindent \textit{Proof of Theorem \ref{teo intro 3}.} Let $U$ be a $f$-periodic non essential open set of period $M\in\N^*$. Let $F:\R^2\rightarrow\R^2$ be a lift of $f$. Let $\mathscr{U}=\pi^{-1}(U)$. By Proposition \ref{prop 2.1}, to get Theorem \ref{teo intro 3}, it is sufficient to prove that the set of points with over-conjugate points is dense in $U$.

Argue by contradiction and assume there is a closed ball $B\subset \mathscr{U}$ which does not have any point with over-conjugate points.

 Let $\beta\in(0,\frac{1}{2})$ be given by Lemma \ref{lemma first sec 3} applied at the closed ball $B$. We can find two balls $B_1,B_2\subset B$ such that for every $z\in B_1,z'\in B_2$ the oriented angle $\prec(v,z'-z)$ belongs to $[0,\beta]+\Z$ and so that the segment joining $z$ to $z'$ is contained in $B$. On one hand, by Lemma \ref{lemma first sec 3}, for every $n\geq 1$ we have that
$$
p_1\circ F^n(z')-p_1\circ F^n(z)>0.
$$
%\noindent Since every $DF^n$ preserves the orientation and since there are no over-conjugate points in $B$, we have that for every $z\in B$ and for every $w\in T^1_z\A$ such that $\theta(z,w)$ admits a measure in $(0,\beta)$, for every $n\in\N^*$ the oriented angle $\theta\left(F^n(z),DF^n(z)w\right)$ admits a measure in $(-\frac{1}{2},0)$.\\
%\noindent We can find two balls $B',B''\subset B$ such that for every $z'\in B',z''\in B''$ the oriented angle $\theta(z',z''-z')$ between $v$ and $z''-z'$ admits a measure in $(0,\beta)$.\\
%\noindent Denote $\gamma:[0,1]\ni t\mapsto (1-t)z'+tz''$. Thus, by the choice of $B,B',B''$, for every $n\in\N^*$ the oriented angle $\theta\left( F^n\circ\gamma(t),(F^n\circ\gamma)'(t) \right)$ admits a measure in $(-\frac{1}{2},0)$. Consequently for every $n\in\N^*$
%\begin{equation}\label{contrad disk}
%p_1\circ F^n(\gamma(1))-p_1\circ F^n(\gamma(0))=p_1\circ F^n(z'')-p_1\circ F^n(z')>0.
%\end{equation}
On the other hand, on the annulus, we apply Poincaré's Recurrence Theorem to the map $f\times f$ and the set $\pi(B_1)\times\pi(B_2)$. We find $n\in\N^*$, $z\in B_1,z'\in B_2$ such that $f^{Mn}(\pi(z))\in\pi(B_1)$ and $f^{Mn}(\pi(z'))\in\pi(B_2)$. Thus, there exists $k_1,k_2\in\Z$ such that $F^{Mn}(z)+(k_1,0)\in B_1,F^{Mn}(z')+(k_2,0)\in B_2$. Since $B\subset \mathscr{U}$ and since $U$ is non essential, we deduce that $k_1=k_2$ and, by the choice of $B_1,B_2$ it holds that $p_1\circ F^{Mn}(z')<p_1\circ F^{Mn}(z)$, obtaining the required contradiction.\\
\hfill\qed
\begin{figure}[h]
	\begin{center}
		\includegraphics[scale=.45]{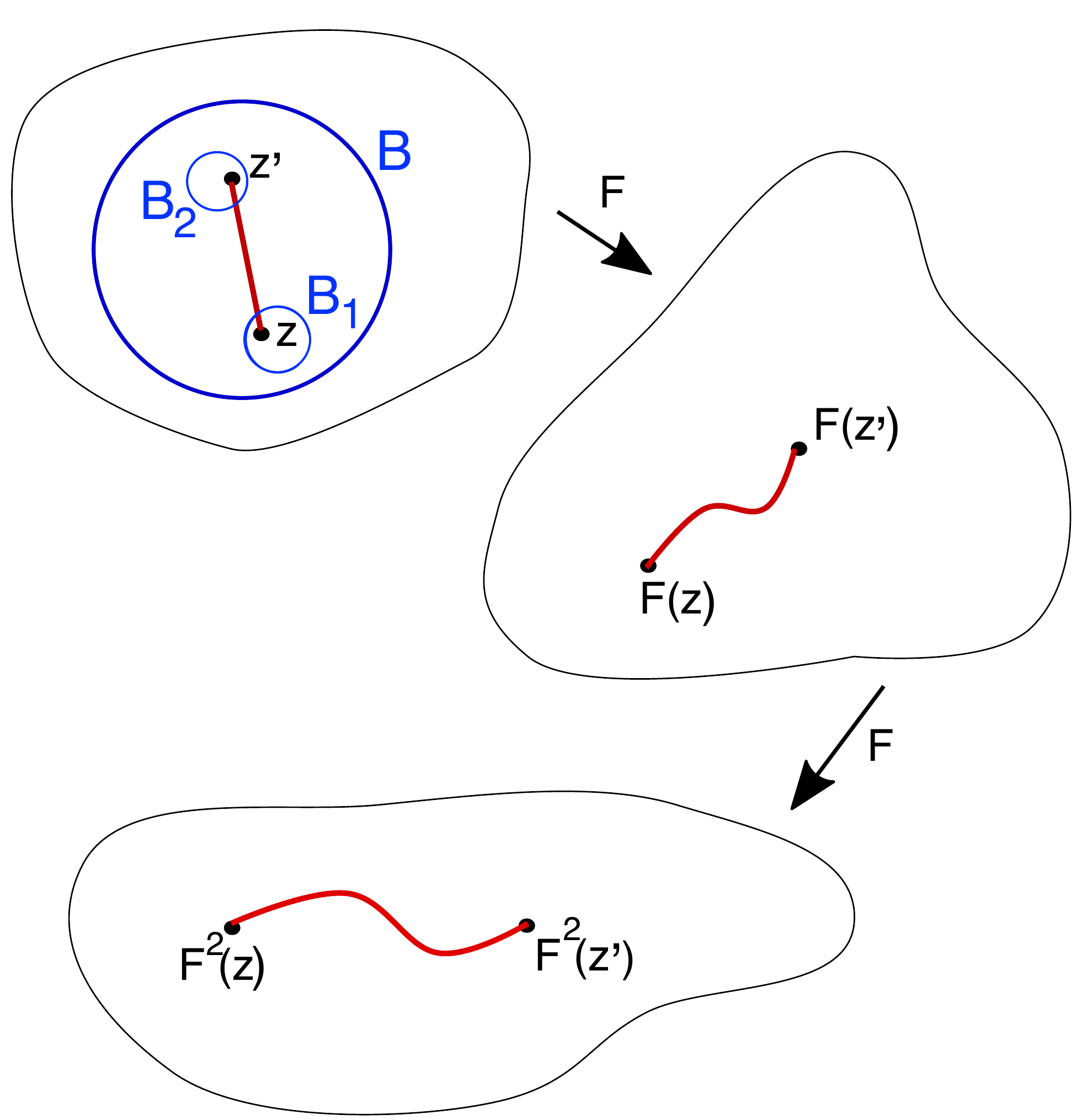}
		\caption{Each vector $DF^n(\gamma(t))\gamma'(t)$ lies in the right half-plane.}
		\label{Figure disk}
	\end{center}
\end{figure}

\begin{remark}
	The idea of the proof of Theorem \ref{teo intro 3} can be pushed further. Actually, for a bounded invariant topological open disk the absence of points with over-conjugate points allows to build a partition into partial graphs of Lipschitz functions which are periodic. See \cite{FloPHD} for such a construction.
	
	 We remark also that an alternative proof of Theorem \ref{teo intro 3} can be given by using the notion of linking number (see \cite{beguin} and \cite{Flo19}) and thanks to the relation between torsion and linking number proved in \cite{Flo19}. Indeed, since $U$ is non essential, denoting as $n\in\N$ the period of $U$, we can select a lift $F$ of $f$ such that $F^n$ fixes the lifts of $U$. In the lifted framework, we can consider the linking number at finite-time of a couple of points, calculated with respect to an isotopy obtained by lifting an isotopy on $\A$.
	 
	 Apply then Poincare's Recurrence Theorem at any small ball contained in $U$ with respect to the dynamics of $f_{\vert U}\times f_{\vert U}$. Thanks to the twist property, we find a couple of points whose linking number at finite-time is almost $-1$. Consequently, by the result in \cite{Flo19}, there exists a point belonging to the starting small ball whose torsion at finite-time is almost $-1$. That is, any arbitrary small ball contains points with over-conjugate points.\\
\end{remark}

\section{A geometric criterion for $\mathcal{C}^0$-integrability: proof of Theorem \ref{teo intro 2}}
\indent The aim of this section is giving a simple, geometrical proof of the result due to Cheng and Sun, see \cite{ChengSun} and here Theorem \ref{teo intro 2}. We prove here the non trivial implication, i.e. if there are no points having conjugate points, then $f_{\vert U}$ is $\mathcal{C}^0$-integrable.
\begin{remark}
The definition given in \cite{ChengSun} of points with conjugate points is equivalent to our definition of conjugate points (here Definition \ref{conj pts}).

Let $F$ be a lift of the exact symplectic twist map $f$ and let $h:\R^2\rightarrow\R$ be a generating function for $F$. A sequence $(x_n,y_n)_{n\in\N}\subset (\R^2)^{\N}$ is a trajectory for $F$ if and only if the sequence $(x_n)_{n\in\N}$ satisfies 
$$
\partial_1h(x_n,x_{n+1})+\partial_2 h(x_{n-1},x_n)=0\quad\forall n\in\N,
$$
see \cite{bangert}, \cite{aubry} or \cite{mackay}. Consequently, when looking at the linearized dynamics, a sequence $(x_n,y_n;\xi_n,\eta_n)_{n\in\N}\subset (T\R^2)^{\N}$ is a trajectory of the linearized map $DF$ if and only if the sequence $(\xi_n)_{n\in\N}$ is a Jacobi vector field over $(x_n)_{n\in\N}$, i.e.
$$
\partial_1\partial_2 h(x_{n-1},x_n)\xi_{n-1}+[\partial^2_1 h(x_n,x_{n+1})+\partial^2_2h(x_{n.1},x_n)]\xi_n+\partial_1\partial_2 h(x_n,x_{n+1})\xi_{n+1}=0.
$$
In \cite{ChengSun}, a point $(x_0,y_0)$ has conjugate points if there exists a non-zero Jacobi vector field $(\xi_n)_{n\in\N}$ over $(x_n)_{n\in\N}=(p_1\circ F^n(x_0,y_0))_{n\in\N}$ such that $\xi_0=\xi_N=0$. Equivalently, $DF^N(x_0,y_0)v=\pm v$. Since $f$ is a positive twist map, it means that $\int_{DI^N((x_0,y_0), v)}d\theta=-\frac k 2$ for some $k\in\N^*$, i.e. $(x_0,y_0)$ has conjugate points according to Definition \ref{conj pts}.
\end{remark}

We recall the definition of rotation number for a homeomorphism of the annulus.
\begin{definizione}
	Let $F:\R^2\rightarrow\R^2$ be the lift of a homeomorphism $f$ of $\A$ isotopic to the identity. Let $z\in\R^2$. The rotation number for $F$ of $z$ is, whenever it exists, the limit
	$$
	\rho_F(z)=\lim_{n\rightarrow+\infty}\dfrac{p_1\circ F^n(z)-p_1(z)}{n}.
	$$
\end{definizione}
\begin{remark}
	Let $\psi:\T\rightarrow\R$ be a continuous map whose graph is $f$-invariant and let $\Psi=\psi\circ \pi'$, where $\R\ni x\mapsto \pi'(x)=x+\Z\in\T$ is the covering projection on $\T$. Then, we can define the rotation number $\rho_F(\Psi)$ for $F$ of $\Psi$ as the rotation number $\rho_F(x,\Psi(x))$ for any $x\in\R$. Indeed, for every $x\in\R$ the rotation number $\rho_F(x,\Psi(x))$ exists and moreover it is independent from $x\in\R$.
\end{remark}
\begin{lemma}\label{lemma graph Cg}
	Let $F:\R^2\rightarrow\R$ be the lift of a positive twist map $f$. The set \begin{equation}\label{def ctilde}\tilde{C}_{F}:=\{ z\in \R^2:\ p_1\circ F(z)=p_1(z) \}\end{equation} is the lift of the graph of a function $\psi_1:\T\rightarrow\R$ of class $\mathcal{C}^1$.
\end{lemma}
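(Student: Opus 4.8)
The plan is to realize $\tilde C_{F}$ directly as the graph over the first coordinate of the unique height at which $F$ fixes $p_1$, and then to verify smoothness via the implicit function theorem and $\Z$-periodicity from the covering relation.

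Concretely, I would set $G\colon\R^2\to\R$, $G(x,y)=p_1\circ F(x,y)-x$, so that $\tilde C_{F}=G^{-1}(0)$. For each fixed $x\in\R$ the map $y\mapsto p_1\circ F(x,y)$ is, by the definition of a positive twist map, an increasing diffeomorphism of $\R$; subtracting the constant $x$ keeps it an increasing diffeomorphism, hence a bijection of $\R$. Therefore it has exactly one zero, which I call $\psi_1(x)$, and $\tilde C_{F}=\{(x,\psi_1(x)):x\in\R\}$ is the graph of the function $\psi_1\colon\R\to\R$.

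For the regularity: since $f$ is $\mathcal{C}^1$, so are $F$ and $G$. Because $y\mapsto p_1\circ F(x,y)$ is an increasing $\mathcal{C}^1$ diffeomorphism of $\R$, its derivative is nowhere zero and hence everywhere positive, i.e. $\partial_y G(x,y)=\partial_y(p_1\circ F)(x,y)>0$ for all $(x,y)$. Applying the implicit function theorem at each point of $\tilde C_{F}$ gives that $\psi_1$ is $\mathcal{C}^1$ near every $x\in\R$, hence $\mathcal{C}^1$ on all of $\R$.

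For the periodicity: since $F$ lifts a map of $\A=\T\times\R$ one has $F(x+1,y)=F(x,y)+(1,0)$, whence $G(x+1,y)=G(x,y)$ and $\psi_1(x+1)=\psi_1(x)$. Thus $\psi_1$ factors through the covering projection $\pi'\colon\R\to\T$ as a $\mathcal{C}^1$ map $\T\to\R$ (still denoted $\psi_1$), and $\tilde C_{F}$, being invariant under $(x,y)\mapsto(x+1,y)$, is precisely $\pi^{-1}$ of the graph of this map. No step is a genuine obstacle here; the only point deserving care is the strict positivity of $\partial_y(p_1\circ F)$, which is not merely monotonicity but uses that a $\mathcal{C}^1$ diffeomorphism of $\R$ has nowhere-vanishing derivative — without it one would get only continuity of $\psi_1$ (as the inverse of a strictly increasing continuous function), not its $\mathcal{C}^1$ regularity.
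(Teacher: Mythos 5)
Your proof is correct and follows the same route as the paper's: realize $\tilde C_F$ as the graph of the unique zero of $y\mapsto p_1\circ F(x,y)-x$, invoke the implicit function theorem using strict positivity of the $y$-partial, and use the lift relation $F(x+1,y)=F(x,y)+(1,0)$ to descend to $\T$. Your added remark that strict positivity of $\partial_y(p_1\circ F)$ comes from the diffeomorphism property (not just monotonicity) is a useful clarification of a point the paper leaves implicit.
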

\begin{proof}
	For every given $x\in\R$, by the twist property, the function $\R\ni y\mapsto p_1\circ F(x,y)\in\R$ is an increasing diffeomorphism and so there exists a unique $y\in\R$ such that $p_1\circ F(x,y)=x$. Let us denote such $y\in\R$ as $\Psi_1(x)$. The set $\tilde{C}_F$ is the graph of the function $\Psi_1:\R\rightarrow\R$.
	
	 To prove that $\Psi_1$ is of class $\mathcal{C}^1$, it is sufficient to apply the Implicit Function Theorem because $\frac{\partial}{\partial y} (p_1\circ F(x,y)-x)>0$. As $\Psi_1$ is clearly periodic of period $1$, it lifts a function $\psi_1:\T\rightarrow\R$ of class $\mathcal{C}^1$.\\
%	\noindent For every $\varepsilon>0$ we have
	%$$
%	p_1\circ F(x,\Psi_1(x)-\varepsilon)<x<p_1\circ F(x,\Psi_1(x)+\varepsilon).
%	$$
%	Thus, there exists $\eta>0$ such that  if $\abs{x-x'}<\eta$ then
%	$$
%p_1\circ F(x',\Psi_1(x)-\varepsilon)<x'<p_1\circ F(x',\Psi_1(x)+\varepsilon).	
%	$$
%	Consequently, it holds that $\Psi_1(x)-\varepsilon<\Psi_1(x')<\Psi_1(x)+\varepsilon$. That is, the function $\Psi_1$ is continuous.\\
%	\noindent The map $F$ commutes with the translation $T:\R^2\rightarrow\R^2,(x,y)\mapsto(x+1,y)$ and so it holds that $\Psi_1(x+1)=\Psi_1(x)$. The set $\tilde{C}_F$ is so the lift of a graph in $\A$.\\
\end{proof}
\begin{remark}
	Lemma \ref{lemma graph Cg} holds true for negative twist maps too. In particular, we denote the set $\tilde{C}_{F^{-1}}=\{z\in\R^2 :\ p_1\circ F^{-1}(z)=p_1(z)\}$ as the graph of the function $\Psi_{-1}:\R\rightarrow\R$ of class $\mathcal{C}^1$ that lifts $\psi_{-1}:\T\rightarrow\R$. Note that $\tilde{C}_{F^{-1}}$ is nothing but the image of $\tilde{C}_F$ by $F$.
	
	 Remark that if a point $z\in\R^2$ lies below (respectively above) $\tilde{C}_F$, then $F(z)$ lies on the left (respectively on the right) of the vertical passing through $z$. Similarly, if $z$ lies below (respectively above) $\tilde{C}_{F^{-1}}$, then $F^{-1}(z)$ lies on the right (respectively on the left) of the vertical passing through $z$.
\end{remark}
%\begin{notazione}\label{notation Psi}
%	Let $f$ be a positive twist map such that for every $n\in\N^*$ the function $f^n$ is a positive twist map and $f^{-n}$ is a negative twist map. Let $F$ be a lift of $f$. For every $q\in\Z\setminus\{0\}$, by Lemma \ref{lemma graph Cg}, the set $\tilde{C}_{F^q}=\{z\in\R^2 :\ p_1\circ F^q(z)=p_1(z)\}$ is the graph of a continuous $1$-periodic function that we denote $\Psi_q:\R\rightarrow\R$.
%\end{notazione}
\begin{lemma}
	Let $f:\A\rightarrow\A$ be a positive twist map with no point with conjugate points. Then for every $n>0$ the function $f^n$ (respectively $f^{-n}$) is a positive (respectively negative) twist map.
\end{lemma}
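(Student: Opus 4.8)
The plan is to reduce the statement to the position of the forward images of the vertical vector. Fix a lift $F$ of $f$ and let $F^n$ be the corresponding lift of $f^n$; then $\partial_y(p_1\circ F^n)(x,y)=p_1(Df^n(z)v)$, so $f^n$ will be a positive twist map as soon as $p_1(Df^n(z)v)>0$ for every $z=(x,y)$ — equivalently $\prec(v,Df^n(z)v)$ lies in the arc $(-\tfrac12,0)$ of $\T$ — together with the fact that the resulting strictly increasing map $y\mapsto p_1\circ F^n(x,y)$ is then onto $\R$. So the heart of the matter is the estimate
\[
-\tfrac12<\int_{DI^n(z,v)}d\theta<0\qquad\text{for every }z\in\A\text{ and every }n\ge 1,
\]
which I would prove by induction on $n$, the hypothesis on conjugate points entering only through the lower bound.

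For the induction I would first record an elementary observation on one-step torsion: \emph{if $w\in T_z\A$ is such that $\prec(v,w)$ has real representative $\gamma\in(-\tfrac12,0)$, then $\text{Torsion}_1(f,z,w)<-\gamma$.} Indeed, fixing an isotopy $(f_t)_{t\in[0,1]}$ from the identity to $f$ and lifts $\Theta,\Theta_v$ of $t\mapsto\prec(v,Df_t(z)w)$ and $t\mapsto\prec(v,Df_t(z)v)$ with $\Theta(0)=\gamma$ and $\Theta_v(0)=0$, the difference $\delta:=\Theta-\Theta_v$ is continuous with $\delta(0)=\gamma\in(-\tfrac12,0)$ and is congruent mod $1$ to $\prec(Df_t(z)v,Df_t(z)w)$; since $\prec(w,v)\in(0,\tfrac12)+\Z$, the twist property recalled in the introduction gives $\prec(Df_t(z)w,Df_t(z)v)\in(0,\tfrac12)+\Z$ for all $t$, so $\delta(t)\in(-\tfrac12,0)+\Z$, whence $\delta(t)\in(-\tfrac12,0)$ by continuity, and $\text{Torsion}_1(f,z,w)=\Theta(1)-\gamma=\text{Torsion}_1(f,z,v)+\delta(1)-\gamma<-\gamma$ (using $\text{Torsion}_1(f,z,v)<0$ and $\delta(1)<0$). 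Now for $n=1$ the estimate reads $\text{Torsion}_1(f,z,v)\in(-\tfrac12,0)$, the defining property of a positive twist map. Assuming it at level $n$, fix $z$, set $\gamma=\int_{DI^n(z,v)}d\theta\in(-\tfrac12,0)$ and $w=Df^n(z)v$; then $\prec(v,w)$ has representative $\gamma$, so by \eqref{def torsion finite} and the observation, $\int_{DI^{n+1}(z,v)}d\theta=\gamma+\text{Torsion}_1(f,f^n(z),w)<\gamma-\gamma=0$. For the lower bound, Lemma \ref{equiv overconj et conj} turns the absence of points with conjugate points into the absence of points with over-conjugate points, so $\int_{DI^{n+1}(z,v)}d\theta\ge-\tfrac12$, and it is not equal to $-\tfrac12$ (a conjugate point with $k=1$); hence $\int_{DI^{n+1}(z,v)}d\theta\in(-\tfrac12,0)$, closing the induction.

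With the estimate established, $p_1(Df^n(z)v)>0$ for every $z$, so $y\mapsto p_1\circ F^n(x,y)$ is strictly increasing; a short further induction on $n$ then shows it is an increasing diffeomorphism of $\R$ (since $f^{n-1}$, a homeomorphism of $\A$ isotopic to the identity, preserves the two ends of $\A$, one has $p_2\circ F^{n-1}(x,y)\to\pm\infty$ as $y\to\pm\infty$, and writing $p_1\circ F(u,w)=u+\varphi(u,w)$ with $\varphi$ continuous and $1$-periodic in $u$ and using that $w\mapsto p_1\circ F(u,w)$ increases one gets $p_1\circ F^n(x,y)\to\pm\infty$ as $y\to\pm\infty$). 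Therefore $f^n$ is a positive twist map, and then $f^{-n}=(f^n)^{-1}$ is a negative twist map by the equivalence recalled after the definition of twist maps. The step requiring care — and where the naive bookkeeping only yields $\int_{DI^{n+1}(z,v)}d\theta\in(-\tfrac12,\tfrac12)$ — is the strict upper bound $\int_{DI^{n+1}(z,v)}d\theta<0$, which is exactly what the orientation–continuity observation provides; alternatively one could invoke the fact recalled in the introduction that $\text{Torsion}_n(f,z,v)\in(-\tfrac12,0)$ for positive twist maps, giving $\int_{DI^n(z,v)}d\theta=n\,\text{Torsion}_n(f,z,v)<0$ at once. The properness part is routine but should not be omitted, since a twist map here is required to make $y\mapsto p_1\circ F^n(x,y)$ a diffeomorphism of $\R$, not merely increasing.
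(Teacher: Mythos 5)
Your proof is correct, and for the key infinitesimal estimate it is essentially the paper's argument: both show that $\prec(v,Df^{n}(z)v)\in(-\tfrac12,0)+\Z$ for all $z$ and $n\ge 1$, hence $\partial_y(p_1\circ F^{n})(x,y)>0$, where the strict upper bound $\int_{DI^n(z,v)}d\theta<0$ holds for any positive twist map (your ``observation'' re-derives this, and you correctly note one can also simply quote the fact $\text{Torsion}_n(f,z,v)\in(-\tfrac12,0)$ recalled in the introduction and attributed to \cite{Flo19}), while the strict lower bound $>-\tfrac12$ uses the absence of over-conjugate points via Lemma~\ref{equiv overconj et conj} together with the exclusion of the value $-\tfrac12$ by the absence of conjugate points.

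Where you genuinely diverge from the paper is the surjectivity/properness step. The paper proves inductively that $p_1\circ F^{n+1}(x,\cdot)$ is onto $\R$ by intersecting the properly embedded curve $F^{n}(\{x\}\times\R)$ (first coordinate increasing, by the inductive hypothesis) with the curve $F^{-1}(\{x'\}\times\R)$ (first coordinate decreasing, since $f^{-1}$ is a negative twist map), using only that both curves tend to $\pm\infty$ in the second coordinate; this gives, for every $x'$, a point $(y,y')$ with $F^{n+1}(x,y)=(x',y')$. Your argument instead combines: (a) $f^{n-1}$ is isotopic to the identity, so it preserves the ends of $\A$ and $p_2\circ F^{n-1}(x,y)\to\pm\infty$; (b) $\varphi(u,w)=p_1\circ F(u,w)-u$ is $1$-periodic in $u$ and increasing in $w$, which by a compactness argument yields $\varphi(u,w)\to\pm\infty$ \emph{uniformly} in $u$ as $w\to\pm\infty$; and (c) the inductive monotonicity of $u=p_1\circ F^{n-1}(x,y)$ in $y$, so $u$ is bounded below for $y$ large. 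Both routes are valid; the paper's intersection argument is arguably cleaner as a single topological step, while yours is more hands-on. If you write yours out in full, spell out the uniformity point in (b) explicitly (a short compactness argument over $\T$), since it is the crux of why the $1$-periodicity of $\varphi$ matters and, as stated, the step reads as a leap.
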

\begin{proof}
Let us prove this fact by induction. Suppose that $f^n$ is a positive twist map and fix a lift $F$ of $f$. Consider $x,x'\in\R$. The map $y\mapsto p_1\circ F^n(x,y)$ is an increasing diffeomorphism of the real line, while the map $y\mapsto p_1\circ F^{-1}(x',y)$ is a decreasing one. Moreover, $\lim_{y\rightarrow\pm \infty} p_2\circ F^n(x,y)=\lim_{y\rightarrow\pm\infty}p_2\circ F^{-1}(x',y)=\pm\infty$. This implies that there exist $y,y'\in\R$ such that $F^n(x,y)=F^{-1}(x',y')$. Equivalently, it holds that $F^{n+1}(x,y)=(x',y')$. Thus, the map $y\mapsto p_1\circ F^{n+1}(x,y)$ is surjective. Since there are no points with conjugate points, by Lemma \ref{equiv overconj et conj}, there are no points with over-conjugate points. In particular for any $y\in\R$ it holds that $\prec(v,DF^{n+1}(x,y)v)\in(-\frac{1}{2},0)+\Z$. Thus $\frac{d}{dy}\,p_1\circ F^{n+1}(x,y)>0$ and consequently the map $y\mapsto p_1\circ F^{n+1}(x,y)$ is an increasing diffeomorphism onto its image. So $f^{n+1}$ is a positive twist map.\\
\end{proof}

\begin{lemma}\label{lemma sequences}
	Let $f:\A\rightarrow\A$ be a positive twist map with no point with conjugate points and $F:\R^2\rightarrow\R^2$ be a lift of $f$. Let $x\in\R$.\vspace{3pt}
	\begin{itemize}
	\item[$(i)$] If $\Psi_1(x)<\Psi_{-1}(x)$, then for every $y\in(\Psi_1(x),\Psi_{-1}(x)]$ the sequence $(p_1\circ F^n(x,y))_{n\geq 0}$ is strictly increasing and for every $y\in[\Psi_1(x),\Psi_{-1}(x))$ the sequence $(p_1\circ F^n(x,y))_{n\leq 0}$ is strictly decreasing. Moreover, $(p_1\circ F^n(x,\Psi_1(x)))_{n\geq 1}$ is strictly increasing and $(p_1\circ F^n(x,\Psi_{-1}(x)))_{n\leq-1}$ strictly decreasing.\vspace{3pt}
	
	\item[$(ii)$] If $\Psi_{-1}(x)<\Psi_1(x)$, then for every $y\in[\Psi_{-1}(x),\Psi_1(x))$ the sequence $(p_1\circ F^n(x,y))_{n\geq 0}$ is strictly decreasing and for every $y\in(\Psi_{-1}(x),\Psi_1(x)]$ the sequence $(p_1\circ F^n(x,y))_{n\leq 0}$ is strictly increasing. Moreover, $(p_1\circ F^n(x,\Psi_1(x)))_{n\geq 1}$ is strictly decreasing and $(p_1\circ F^n(x,\Psi_{-1}(x)))_{n\leq -1}$ strictly increasing.
	\end{itemize}
\end{lemma}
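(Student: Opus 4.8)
The plan is to exploit that, since $f$ has no conjugate points, the previous lemma gives that $f^n$ is a positive twist map and $f^{-n}$ a negative one for every $n\geq 1$; hence, applying Lemma \ref{lemma graph Cg} (and its version for negative twist maps) to $F^n$ and $F^{-n}$, each set $\tilde{C}_{F^n}=\{z\in\R^2:p_1\circ F^n(z)=p_1(z)\}$ is the graph of a $\mathcal{C}^1$ function $\Psi_1^{(n)}:\R\to\R$ and each $\tilde{C}_{F^{-n}}$ the graph of a $\mathcal{C}^1$ function $\Psi_{-1}^{(n)}:\R\to\R$, with $\Psi_1^{(1)}=\Psi_1$ and $\Psi_{-1}^{(1)}=\Psi_{-1}$. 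By the Remark following Lemma \ref{lemma graph Cg}, a point $z$ lies strictly above $\tilde{C}_{F^n}$ iff $p_1\circ F^n(z)>p_1(z)$, while it lies strictly above $\tilde{C}_{F^{-n}}$ iff $p_1\circ F^{-n}(z)<p_1(z)$. Finally, if $z\in\tilde{C}_{F^n}$ then $F^n(z)$ lies on the vertical through $z$ and satisfies $p_1\circ F^{-n}(F^n(z))=p_1(z)=p_1(F^n(z))$, so $F^n(z)\in\tilde{C}_{F^{-n}}$; since the latter is a graph this yields the swap identity $F^n(x,\Psi_1^{(n)}(x))=(x,\Psi_{-1}^{(n)}(x))$ for every $x$.

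The core of the argument is a mutual induction at a fixed $x$ with $\Psi_1(x)<\Psi_{-1}(x)$. Taking $z=(x,\Psi_1^{(n)}(x))\in\tilde{C}_{F^n}$ and combining the swap identity with the sign conventions above, one checks that $\Psi_1^{(n+1)}(x)<\Psi_1^{(n)}(x)$ exactly when $(x,\Psi_{-1}^{(n)}(x))$ lies strictly above $\tilde{C}_F$, i.e. exactly when $\Psi_{-1}^{(n)}(x)>\Psi_1(x)$; symmetrically $\Psi_{-1}^{(n+1)}(x)>\Psi_{-1}^{(n)}(x)$ exactly when $\Psi_1^{(n)}(x)<\Psi_{-1}(x)$. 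Starting from $\Psi_1^{(1)}(x)<\Psi_{-1}^{(1)}(x)$, these two equivalences feed each other, so by induction $(\Psi_1^{(n)}(x))_{n\geq 1}$ is strictly decreasing with $\Psi_1^{(n)}(x)<\Psi_1(x)$ for $n\geq 2$, and $(\Psi_{-1}^{(n)}(x))_{n\geq 1}$ is strictly increasing with $\Psi_{-1}^{(n)}(x)>\Psi_{-1}(x)$ for $n\geq 2$. Hence, for $y>\Psi_1(x)$ the point $(x,y)$ lies strictly above every $\tilde{C}_{F^n}$, so $p_1\circ F^n(x,y)>x$ for all $n\geq 1$, and for $y\leq\Psi_{-1}(x)$ it lies on or below every $\tilde{C}_{F^{-n}}$, so $p_1\circ F^{-n}(x,y)\geq x$ for all $n\geq 1$. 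When $\Psi_{-1}(x)<\Psi_1(x)$, the same scheme with all inequalities reversed gives the analogous statements for case $(ii)$.

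To upgrade this to the consecutive, strict monotonicity of the whole orbit (and to treat the endpoints), write $z_n=F^n(x,y)$, $x_n=p_1(z_n)$, and note that $x_{n+1}>x_n$ is equivalent to ``$z_n$ lies strictly above $\tilde{C}_F$''. I would argue by induction on $n$: if $z_0,\dots,z_{n-1}$ all lie strictly above $\tilde{C}_F$, then $z_n=F(z_{n-1})$ lies strictly above $\tilde{C}_{F^{-1}}$ (because $F$ carries ``above $\tilde{C}_F$'' onto ``above $\tilde{C}_{F^{-1}}$''); and if moreover $\Psi_1(x_n)\leq\Psi_{-1}(x_n)$ then $p_2(z_n)>\Psi_{-1}(x_n)\geq\Psi_1(x_n)$, so $z_n$ lies strictly above $\tilde{C}_F$ and the induction continues. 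The endpoint statements follow from the swap identity with $n=1$, namely $F(x,\Psi_1(x))=(x,\Psi_{-1}(x))$ and $F^{-1}(x,\Psi_{-1}(x))=(x,\Psi_1(x))$, so that the forward orbit of $(x,\Psi_1(x))$ from time $1$ on coincides with that of $(x,\Psi_{-1}(x))$, already covered; the backward statements in $(i)$ and all of $(ii)$ follow by the obvious substitutions $F\leftrightarrow F^{-1}$, ``above''$\leftrightarrow$``below''.

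The main obstacle is precisely the only point where the induction above can break: when $x_n$ is a strict case-$(ii)$ point carrying $z_n$ in the reversed lens $\Psi_{-1}(x_n)<p_2(z_n)\leq\Psi_1(x_n)$. This reduces to excluding a single configuration: the case-$(ii)$ mutual induction applied at $x_n$ shows that such a $z_n$ is a strict maximum of the first coordinate along its entire orbit, while the case-$(i)$ mutual induction applied at $x$ shows that $z_0$, lying strictly inside the case-$(i)$ lens, is a strict minimum of that same orbit; one must show that no orbit can carry simultaneously such a strict minimum and such a strict maximum. I expect this exclusion to be the delicate part, to be settled either by a finer bookkeeping with the curves $\tilde{C}_{F^m}$ at the intermediate orbit points, or — in the bounded, measure-preserving setting where the lemma is used — by a Poincaré recurrence argument contradicting the strict monotonicity of the first coordinate established in the second step.
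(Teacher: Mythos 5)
Your route differs from the paper's: you introduce the curves $\tilde{C}_{F^n}$ and run a ``mutual induction'' on the functions $\Psi_1^{(n)}, \Psi_{-1}^{(n)}$, and this first part is correct — it does establish $\Psi_1^{(n)}(x)<\Psi_1(x)$, $\Psi_{-1}^{(n)}(x)>\Psi_{-1}(x)$ for $n\geq 2$, hence $p_1\circ F^n(x,y)>x$ for $y>\Psi_1(x)$ and $n\geq 1$. But, as you yourself flag, this only locates $F^n(x,y)$ relative to the \emph{starting} abscissa $x$; to get $x_{n+1}>x_n$ you need to locate $z_n=F^n(x,y)$ relative to $\tilde{C}_F$ at the \emph{current} abscissa $x_n$, and those are independent pieces of information. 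The ``reversed lens'' configuration $\Psi_{-1}(x_n)<p_2(z_n)\leq\Psi_1(x_n)$ is a genuine gap, and neither of your suggested repairs closes it. The ``strict minimum vs. strict maximum'' observation is not a contradiction: having $x_0<x_m\leq x_n$ for all $m\neq 0$ is perfectly consistent with everything your first two paragraphs establish (a strict minimum at time $0$ and a maximum at time $n$ along the same orbit coexist without difficulty). And a Poincar\'e-recurrence argument is unavailable: Lemma \ref{lemma sequences} assumes only a positive twist map without conjugate points — no measure preservation and no boundedness — and the paper's proof uses neither.

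The paper's argument sidesteps the issue entirely and is considerably shorter. It uses your ``swap identity'' only at $n=1$, namely $F(x,\Psi_1(x))=(x,\Psi_{-1}(x))$, to see that the images $\gamma_n:=F^n\bigl(\{x\}\times[\Psi_1(x),\Psi_{-1}(x)]\bigr)$ chain end to end: the top endpoint of $\gamma_n$ equals the bottom endpoint of $\gamma_{n+1}$. Because $F^n$ is a positive twist for every $n\geq 1$ (the preceding lemma), each $\gamma_n$ projects to the first coordinate injectively and strictly increasingly in $y$; writing $a_n=p_1\circ F^n(x,\Psi_1(x))$, one gets $p_1(\gamma_n)=[a_n,a_{n+1}]$ with $a_n<a_{n+1}$, so $(a_n)_{n\geq 1}$ is strictly increasing (the ``moreover'' part), and for $y\in(\Psi_1(x),\Psi_{-1}(x)]$ one reads off $p_1\circ F^n(x,y)\in(a_n,a_{n+1}]$, hence $p_1\circ F^n(x,y)\leq a_{n+1}<p_1\circ F^{n+1}(x,y)$. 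The backward statements use that $F^{-n}$ is a negative twist, and case $(ii)$ is symmetric. In short: rather than proving $x_n>x$ and then trying to upgrade, chain the vertical segment's images so that consecutive monotonicity is built in from the start.
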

\begin{proof}
	Let us consider $x\in\R$ such that $\Psi_1(x)<\Psi_{-1}(x)$, the case $(ii)$ can be treated similarly. Recall that $f^n$ is a positive twist map for every $n\in\N^*$. So, for every $n\geq 1$, the image by $F^n$ of the oriented vertical segment $\{x\}\times[\Psi_1(x),\Psi_{-1}(x)]$ is a path that projects injectively on the horizontal axis, with the first coordinate increasing, and that joins $F^n(x,\Psi_1(x))$ to $F^n(x,\Psi_{-1}(x))=F^{n+1}(x,\Psi_1(x))$. We deduce that for every $y\in(\Psi_1(x),\Psi_{-1}(x)]$, the sequence $(p_1\circ F^n(x,y))_{n\geq 0}$ is strictly increasing and that $(p_1\circ F^n(x,\Psi_1(x)))_{n\geq 1}$ is strictly increasing. See Figure \ref{dessinFLC}. Since $f^n$ is a negative twist map if $n<0$, we prove similarly that $(p_1\circ F^n(x,y))_{n\leq 0}$ is strictly decreasing if $y\in[\Psi_1(x),\Psi_{-1}(x))$, as is $(p_1\circ F^n(x,\Psi_{-1}(x)))_{n\leq -1}$.\\
	\end{proof}
	\begin{figure}[h]
		\centering
		\includegraphics[scale=0.5]{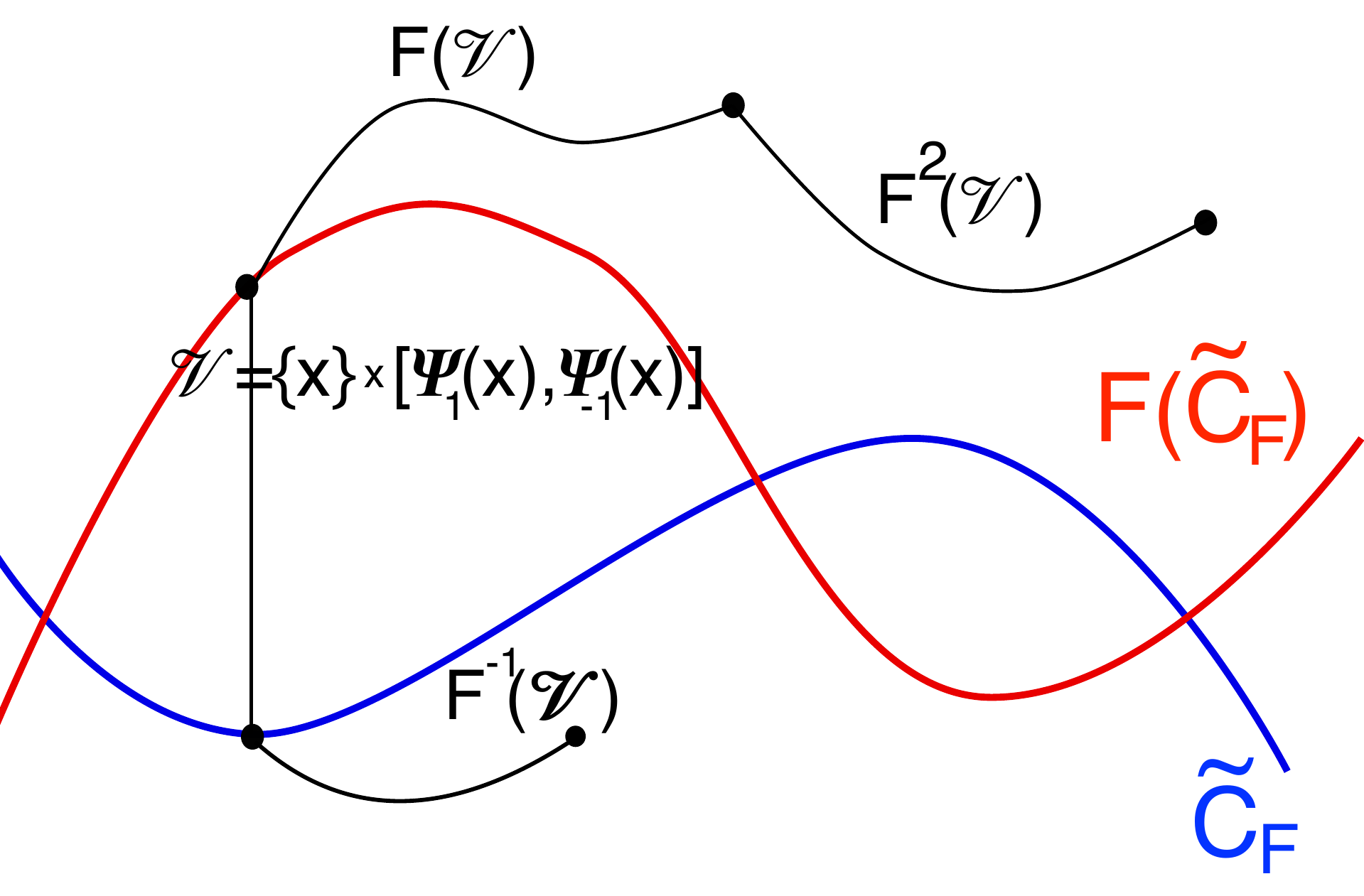}
		\caption{If $\Psi_1(x)<\Psi_{-1}(x)$, then, for every $n\geq 1$, the image by $F^n$ of $\{x\}\times[\Psi_1(x),\Psi_{-1}(x)]$ projects injectively on the horizontal axis.}
		\label{dessinFLC}
	\end{figure}

	%By the definition of $\Psi_1$ and $\Psi_{-1}$, for every $n\geq 0$ it holds that $F^{n+1}(x,\Psi_1(x))=F^n(x,\Psi_{-1}(x))$. Thus, we deduce that for every $y\in(\Psi_1(x),\Psi_{-1}(x)]$ and for every $n\geq 0$
	 %$$
	% p_1\circ F^n(x,y)\leq p_1\circ F^n(x,\Psi_{-1}(x))=p_1\circ F^{n+1}(x,\Psi_1(x))<p_1\circ F^{n+1}(x,y).
	 %$$
	%\noindent That is, for every $y\in(\Psi_1(x),\Psi_{-1}(x)]$, the sequence $(p_1\circ F^n(x,\xi))_{n\geq 0}$ is strictly increasing. We remark that also $(p_1\circ F^n(x,\Psi_1(x)))_{n\geq 1}$ is a strictly increasing sequence.\\
%	\noindent Since for every $n\in\N^*$ the function $f^{-n}$ is a negative twist map and $F^{-(n+1)}(x,\Psi_{-1}(x))=F^{-n}(x,\Psi_1(x))$, we similarly deduce that for every $y\in[\Psi_1(x),\Psi_{-1}(x))$ and $n\geq 0$
	%$$
%	p_1\circ F^{-n}(x,y)\leq p_1\circ F^{-n}(x,\Psi_1(x))=p_1\circ F^{-(n+1)}(x,\Psi_{-1}(x))<p_1\circ F^{-(n+1)}(x,y),
%	$$
%	that is the sequence $(p_1\circ F^{n}(x,\xi))_{n\leq 0}$ is strictly decreasing. Observe that the sequence $(p_1\circ F^{n}(x,\Psi_{-1}(x)))_{n\leq -1}$ is strictly decreasing too.\\
\begin{notazione}\label{notazione X+ e X-}
	Denote
	\begin{equation}\label{def X-}
	X^-:=\left\{ (x,y)\in\R^2 :\ \Psi_1(x)< y<\Psi_{-1}(x) \right\},
	\end{equation}
	\begin{equation}\label{def X+}
	X^+:=\left\{ (x,y)\in\R^2 :\ \Psi_{-1}(x)< y<\Psi_1(x) \right\}.
	\end{equation}
\end{notazione}
\begin{remark}\label{remark mathscr X}
	Thanks to Lemma \ref{lemma sequences}, if $z$ is not fixed by $f$, then:\vspace{3pt}
	
	\begin{itemize}
		\item[$(1)$] either the sequence $(p_1\circ F^n(z))_{n\in\Z}$ is strictly monotone;\vspace{3pt}
		
		\item[$(2)$] or there exists $n_0\in\Z$ such that the sequences $(p_1\circ F^n(z))_{n\leq n_0}$ and $(p_1\circ F^n(z))_{n\geq n_0}$ are strictly monotone, with different monotonicity;\vspace{3pt}
		
		\item[$(3)$] or there exists $n_0\in\Z$ such that the sequences $(p_1\circ F^n(z))_{n\leq n_0}$ and $(p_1\circ F^n(z))_{n\geq n_0+1}$ are strictly monotone, with different monotonicity, while $p_1\circ F^{n_0}(z)=p_1\circ F^{n_0+1}(z)$.\vspace{3pt}
		
	\end{itemize}
	 Note that the set of points satisfying the second item is nothing but the union of $\ \bigcup_{n\in\Z}F^n(X^-)$ and $\bigcup_{n\in\Z}F^n(X^+)$ and that these two sets are disjoint because they correspond to complementary monotonicity. Note also that the sets $X^-$ and $X^+$ are wandering open sets: the sets $F^n(X^-), n\in\Z,$ are pairwise disjoint, as are the sets $F^n(X^+), n\in\Z$.
	 
	 Let us write $\tilde{C}_F=\tilde{C}_F^+\sqcup \tilde{C}_F^-\sqcup \text{Fix}(f)$, where $(x,\Phi_1(x))\in\tilde{C}_F^+$ if $\Psi_{-1}(x)>\Psi_1(x)$ and $(x,\Psi_1(x))\in\tilde{C}_F^-$ if $\Psi_{-1}(x)<\Psi_1(x)$. The set of points satisfying the third item is the (disjoint) union of $\bigcup_{n\in\Z}F^n(\tilde{C}_F^+)$ and $\bigcup_{n\in\Z}F^n(\tilde{C}_F^-)$. Note also that the last two sets are disjoint from $\bigcup_{n\in\Z}F^n(X^+)$ and $\bigcup_{n\in\Z}F^n(X^-)$.%Meanwhile, the third item is the union of $\bigcup_{n\in\Z}F^n(\mathrm{gr}(\Psi_1)\setminus\text{Fix}(f))$ and $\bigcup_{n\in\Z}F^n(\mathrm{gr}(\Psi_{-1})\setminus\text{Fix}(f))$ and they are also disjoint.
	%We remark that the set $\bigcup_{k\in\Z}F^k(X^-)$ (respectively $\bigcup_{k\in\Z}F^k(X^+)$) is the set of points $z\in\R^2$ for which there exists $k_0\in\Z$ such that $(p_1\circ F^k(z))_{k\geq k_0}$ is strictly increasing (respectively strictly decreasing) and $(p_1\circ F^k(z))_{k\leq k_0}$ is strictly decreasing (respectively strictly increasing), that is the set at point $(2)$ (respectively $(3)$) of the above partition of $\R^2$.\\ \noindent Observe that $\bigcup_{k\in\Z}F^k(X^-)\cap\bigcup_{k\in\Z}F^k(X^+)=\emptyset$ and moreover both $\bigcup_{k\in\Z}F^k(X^-)$ and $\bigcup_{k\in\Z}F^k(X^+)$ are wandering domains.
\end{remark}
\begin{lemma}\label{lemma either or}
	Let $f:\A\rightarrow\A$ be a positive twist map with no point with conjugate points and $F:\R^2\rightarrow\R^2$ be a lift of $f$. Let $x_1<x_2$ and suppose that $\Psi_1(x_1)<\Psi_{-1}(x_1)$ and $\Psi_{-1}(x_2)<\Psi_1(x_2)$. Then\vspace{5pt}
	
	\begin{itemize}
		\item[$(a)$] for every $y\in[\Psi_1(x_1),\Psi_{-1}(x_1)]$ and for every $n\in\N$ it holds that  $p_1\circ F^n(x_1,y)<x_2$
	\end{itemize}
\indent or 
\begin{itemize}
		\item[$(b)$] for every $y\in[\Psi_{-1}(x_2),\Psi_1(x_2)]$ and for every $n\in\N$ it holds that  $p_1\circ F^{-n}(x_2,y)>x_1$.
		\end{itemize}
\end{lemma}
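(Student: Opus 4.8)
The plan is to transport the two vertical segments
\[
V_1:=\{x_1\}\times[\Psi_1(x_1),\Psi_{-1}(x_1)],\qquad V_2:=\{x_2\}\times[\Psi_{-1}(x_2),\Psi_1(x_2)]
\]
along the dynamics, setting $\Gamma^+:=\bigcup_{n\ge 1}F^n(V_1)$ and $\Gamma^-:=\bigcup_{n\ge 1}F^{-n}(V_2)$. I will assume that both $(a)$ and $(b)$ fail, deduce that $\Gamma^+$ and $\Gamma^-$ must intersect, and then show that such an intersection is impossible, a contradiction. Throughout I will use the elementary facts (coming from the twist property together with the definitions of $\tilde{C}_F$ and $\tilde{C}_{F^{-1}}$, see the remark following Lemma~\ref{lemma graph Cg}) that a point $z$ lies strictly above $\tilde{C}_F$ exactly when $p_1\circ F(z)>p_1(z)$, and strictly above $\tilde{C}_{F^{-1}}$ exactly when $p_1\circ F^{-1}(z)<p_1(z)$.

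First I would describe the two curves. Since $\Psi_1(x_1)<\Psi_{-1}(x_1)$, Lemma~\ref{lemma sequences}$(i)$ applies at $x_1$: for each $n\ge 1$ the arc $F^n(V_1)$ projects injectively onto the horizontal axis with increasing first coordinate, consecutive arcs are glued by $F^n(x_1,\Psi_{-1}(x_1))=F^{n+1}(x_1,\Psi_1(x_1))$, and the abscissas $p_1\circ F^n(x_1,\Psi_1(x_1))$ of the left endpoints form a strictly increasing sequence in $n\ge 1$. Hence $\Gamma^+$ is the graph of a continuous function $g^+$ defined on a maximal interval $[x_1,b^+)$. Moreover $(x_1,\Psi_1(x_1))\in\tilde{C}_F$ gives $p_1\circ F(x_1,\Psi_1(x_1))=x_1$ and $F(x_1,\Psi_1(x_1))=(x_1,\Psi_{-1}(x_1))$, so $g^+(x_1)=\Psi_{-1}(x_1)$; and every point $z=F^n(v_1)$ of $\Gamma^+$ (with $n\ge 1$, $v_1\in V_1$) satisfies $p_1\circ F(z)=p_1\circ F^{n+1}(v_1)>p_1\circ F^n(v_1)=p_1(z)$ because the sequence $(p_1\circ F^k(v_1))_{k\ge 1}$ is strictly increasing by Lemma~\ref{lemma sequences}$(i)$, so $\Gamma^+$ lies strictly above $\tilde{C}_F$. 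Symmetrically, since $\Psi_{-1}(x_2)<\Psi_1(x_2)$, Lemma~\ref{lemma sequences}$(ii)$ at $x_2$ shows that $\Gamma^-$ is the graph of a continuous function $g^-$ on a maximal interval $(c^-,x_2]$, with $g^-(x_2)=\Psi_1(x_2)$ and $\Gamma^-$ strictly above $\tilde{C}_{F^{-1}}$.

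Next I would exploit the two failures. If $(a)$ fails, some point of $\Gamma^+$ has first coordinate $\ge x_2$; since the first coordinate increases continuously from $x_1$ along $\Gamma^+$, it follows that $b^+>x_2$, so $g^+$ is defined on $[x_1,x_2]$, and $g^+(x_2)>\Psi_1(x_2)$ because $\Gamma^+$ is strictly above $\tilde{C}_F$. If in addition $(b)$ fails, some point of $\Gamma^-$ has first coordinate $\le x_1$, so $g^-$ is defined on $[x_1,x_2]$, and $g^-(x_1)>\Psi_{-1}(x_1)$ because $\Gamma^-$ is strictly above $\tilde{C}_{F^{-1}}$. Therefore the continuous function $g^+-g^-$ on $[x_1,x_2]$ is negative at $x_1$ (since $g^+(x_1)=\Psi_{-1}(x_1)<g^-(x_1)$) and positive at $x_2$ (since $g^+(x_2)>\Psi_1(x_2)=g^-(x_2)$), and the Intermediate Value Theorem yields a point in $\Gamma^+\cap\Gamma^-$.

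Finally I would rule out any such intersection. A point $z\in\Gamma^+\cap\Gamma^-$ can be written $z=F^a(v_1)=F^{-b}(v_2)$ with $a,b\ge 1$, $v_1\in V_1$ and $v_2\in V_2$, whence $F^{a+b}(v_1)=v_2$ with $a+b\ge 2$. By Lemma~\ref{lemma sequences}$(i)$ the sequence $(p_1\circ F^k(v_1))_{k\ge 1}$ is strictly increasing, so $(p_1\circ F^j(v_2))_{j\ge 0}=(p_1\circ F^{a+b+j}(v_1))_{j\ge 0}$ is strictly increasing; but by Lemma~\ref{lemma sequences}$(ii)$ the sequence $(p_1\circ F^k(v_2))_{k\ge 1}$ is strictly decreasing, which contradicts $p_1\circ F^1(v_2)<p_1\circ F^2(v_2)$. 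Hence $\Gamma^+\cap\Gamma^-=\varnothing$, so at least one of $(a)$, $(b)$ must hold. The step I expect to be most delicate is the second paragraph: verifying that $\Gamma^+$ and $\Gamma^-$ are genuine graphs over the claimed intervals (the gluing of the arcs, the monotonicity of the endpoint abscissas, and the exact boundary values $g^+(x_1)=\Psi_{-1}(x_1)$ and $g^-(x_2)=\Psi_1(x_2)$ read off from $\tilde{C}_F$ and $\tilde{C}_{F^{-1}}$); once this bookkeeping is in place, the intermediate value step and the monotonicity contradiction are both short.
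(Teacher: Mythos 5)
Your proof is correct and follows essentially the same route as the paper: build the two ``caterpillar'' graphs $\Gamma^+=\bigcup_{n\ge1}F^n(V_1)$ and $\Gamma^-=\bigcup_{n\ge1}F^{-n}(V_2)$, show they are disjoint, and apply the Intermediate Value Theorem to the difference of the two graph functions on $[x_1,x_2]$. The one place you diverge from the paper is the disjointness step. The paper invokes Remark~\ref{remark mathscr X}, noting that the two graphs live in $\bigcup_{n\in\Z}F^n(X^-\cup\tilde C_F^\pm)$ and $\bigcup_{n\in\Z}F^n(X^+\cup\tilde C_F^\mp)$ respectively, which are disjoint because they correspond to orbits with opposite monotonicity type. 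You instead re-derive the disjointness directly from Lemma~\ref{lemma sequences}: an intersection would force $F^{a+b}(v_1)=v_2$ with $a+b\ge2$, making the tail $(p_1\circ F^k(v_2))_{k\ge1}$ both strictly increasing (via the $v_1$-orbit, Lemma~\ref{lemma sequences}$(i)$) and strictly decreasing (Lemma~\ref{lemma sequences}$(ii)$). This is the same underlying mechanism, just unpacked rather than quoted from the remark; it is arguably cleaner since it avoids the slightly awkward $\tilde C_F^\pm$ bookkeeping. You are also more explicit about the endpoint values $g^+(x_1)=\Psi_{-1}(x_1)$ and $g^-(x_2)=\Psi_1(x_2)$ and the strict inequalities $g^+(x_2)>\Psi_1(x_2)$, $g^-(x_1)>\Psi_{-1}(x_1)$, which the paper absorbs into the phrase ``strictly above the graph of $\Psi_0$ except at its left/right end''; this extra care is the right call, as it is exactly what makes the IVT sign change genuine.
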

\begin{proof}
	As explained in the proof of Lemma \ref{lemma sequences}, the union of the images by $F^n, n\geq 1,$ of the vertical segment $\{x_1\}\times[\Psi_1(x_1),\Psi_{-1}(x_1)]$ is the graph of a continuous function $\chi_1$ defined on an interval $[x_1,x_1')$, where $x_1'\in(x_1,+\infty]$. This graph is strictly above the graph of $\Psi_0=\max(\Psi_1,\Psi_{-1})$ except at its left end. 
	\noindent Similarly, the union of the images by $F^{-n},n\geq 1,$ of the vertical segment $\{x_2\}\times[\Psi_{-1}(x_2),\Psi_1(x_2)]$ is the graph of a continuous function $\chi_2$ defined on an interval $(x_2',x_2]$, where $x_2'\in[-\infty,x_2)$, and this graph is strictly above the graph of $\Psi_0$ except at its right end. 
	These graphs are contained in $\bigcup_{n\in\Z}F^n(X^-\cup\tilde{C}_F^-)$ and $\bigcup_{n\in\Z}F^n(X^+\cup\tilde{C}_F^+)$ respectively and so are disjoint. By the Intermediate Value Theorem applied at $\chi_1-\chi_2$, one deduces that $x_1'\leq x_2$ or $x_1\leq x_2'$.\\
%	Since every $f^n$ preserves the boundaries at infinity and since, by Remark \ref{remark mathscr X}, $\bigcup_{k\in\Z}F^k(X^-)\cap\bigcup_{k\in\Z}F^k(X^+)=\emptyset$, for every $n\geq 1$ and every $y\in[\Psi_1(x_1),\Psi_{-1}(x_1))$ it holds that
%	\begin{equation}\label{Eq lemma 4.3}
%	p_2\circ F^n(x_1,y)\geq\max\left\{ \Psi_1(p_1\circ F^n(x_1,y)),\Psi_{-1}(p_1\circ F^n(x_1,y)) \right\}.
%	\end{equation}
%	\noindent Assume that condition $(a)$ is not satisfied. That is, there exists $\bar{n}\geq 1$ and $\xi\in[\Psi_1(x_1),\Psi_{-1}(x_1))$ such that $$p_1\circ F^{\bar{n}-1}(x_1,\xi)<x_2\leq p_1\circ F^{\bar{n}}(x_1,\xi).$$ \noindent Without loss of generality (up to select another $\xi\in[\Psi_1(x_1),\Psi_{-1}(x_1))$ and eventually replacing $\bar{n}$ with $\bar{n}-1$), we can suppose that $x_2=p_1\circ F^{\bar{n}}(x_1,\xi)$.\\
%	\noindent From \eqref{Eq lemma 4.3}, for every $y\in[\Psi_{-1}(x_2),\Psi_1(x_2))$ it holds that $p_2\circ F^{\bar{n}}(x_1,\xi)>y$.\\ \noindent Since every $f^{-n}$ is a negative twist map, for every $n\geq 0$ and for every $y\in[\Psi_{-1}(x_2),\Psi_1(x_2))$ we deduce that
%	$$
%	p_1\circ F^{-n}(F^{\bar{n}}(x_1,\xi))<p_1\circ F^{-n}(x_2,y).
%	$$
%	By Lemma \ref{lemma sequences}, for every $n\in\Z$ it holds that $p_1\circ F^n(x_1,\xi)\geq x_1$. Therefore, for every $n\geq 0$ and for every $y\in[\Psi_{-1}(x_2),\Psi_1(x_2))$ we conclude that $p_1\circ F^{-n}(x_2,y)>x_1$.\\
\end{proof}
\begin{proposizione}\label{prop graph rational}
	Let $f:\A\rightarrow\A$ be an exact symplectic positive twist map with no point having conjugate points and $F:\R^2\rightarrow\R^2$ be a lift of $f$. Then it holds that $	\tilde{C}_F=\{z\in\R^2 :\ F(z)=z\}$.
\end{proposizione}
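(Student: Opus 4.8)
The plan is to reduce the claimed equality to the assertion that the open sets $X^{-}$ and $X^{+}$ of Notation~\ref{notazione X+ e X-} are empty, and then to deduce this from a short recurrence argument.

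First I would record the reduction. By Lemma~\ref{lemma graph Cg} the set $\tilde C_{F}$ is the graph of $\Psi_{1}$, and by the remark following that lemma $F(\tilde C_{F})=\tilde C_{F^{-1}}$ is the graph of $\Psi_{-1}$. If $z=(x,\Psi_{1}(x))\in\tilde C_{F}$, then $p_{1}\circ F(z)=p_{1}(z)=x$ while $F(z)$ lies on the graph of $\Psi_{-1}$, hence $F(z)=(x,\Psi_{-1}(x))$; consequently $F(z)=z$ if and only if $\Psi_{1}(x)=\Psi_{-1}(x)$. Since $\{z\in\R^{2}:F(z)=z\}\subseteq\tilde C_{F}$ is automatic, proving the proposition amounts to showing $\Psi_{1}\equiv\Psi_{-1}$ on $\R$, that is, $X^{-}=X^{+}=\emptyset$.

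Then I would argue by contradiction, assuming $X^{-}\neq\emptyset$ (the case $X^{+}\neq\emptyset$ being symmetric, e.g.\ by replacing $f$ with $f^{-1}$). Since $\Psi_{1}$ and $\Psi_{-1}$ are $1$-periodic, the open set $X^{-}$ is invariant under the deck transformation $(x,y)\mapsto(x+1,y)$, so it is the full $\pi$-preimage of a nonempty open set $\pi(X^{-})\subseteq\A$, and $f^{n}\circ\pi=\pi\circ F^{n}$ for all $n$. By Remark~\ref{remark mathscr X} the sets $F^{n}(X^{-})$, $n\in\Z$, are pairwise disjoint; combined with the translation invariance of $X^{-}$ this yields $F^{n}(X^{-})\cap X^{-}=\emptyset$, hence $f^{n}(\pi(X^{-}))\cap\pi(X^{-})=\emptyset$, for every $n\neq0$, so $\pi(X^{-})$ is a wandering open set for $f$. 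On the other hand $\text{Leb}(\pi(X^{-}))$ is the integral over $\T$ of the positive part of $\psi_{-1}-\psi_{1}$, which is finite because $\psi_{1}$ and $\psi_{-1}$ are continuous on the compact torus $\T$. As $f$ is exact symplectic it preserves $\text{Leb}$, and by Poincaré's Recurrence Theorem a wandering set of finite measure has zero measure; being open, $\pi(X^{-})$ is then empty, contradicting $X^{-}\neq\emptyset$. Hence $X^{-}=\emptyset$, and symmetrically $X^{+}=\emptyset$, so $\Psi_{1}\equiv\Psi_{-1}$ and $\tilde C_{F}=\{z\in\R^{2}:F(z)=z\}$.

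I expect the only delicate point to be the passage from the universal cover to the annulus: the wandering property furnished by Remark~\ref{remark mathscr X} is a statement about the lift $F$, and one has to use the $\Z$-periodicity of $X^{\pm}$ both to transfer it to a wandering property of $f$ on $\A$ and, together with the continuity of $\psi_{\pm1}$, to ensure that $\pi(X^{\pm})$ has finite Lebesgue measure; once this is in place, Poincaré recurrence closes the argument. Note that the exact-symplectic hypothesis enters only through invariance of the Lebesgue measure.
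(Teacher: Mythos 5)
Your reduction to $\Psi_1\equiv\Psi_{-1}$ and your verification that $\pi(X^-)$ is an open wandering set of finite Lebesgue measure for $f$ are both correct. The gap is in the very last step: Poincar\'e's Recurrence Theorem, in the form ``a wandering set of finite measure has zero measure,'' requires the ambient invariant measure to be finite, and $\mathrm{Leb}$ on $\A=\T\times\R$ is infinite. A measure-preserving transformation of an infinite-measure space need not be conservative, and an open wandering set of positive finite measure is perfectly possible. In fact your argument would, if correct, prove the proposition for every Lebesgue-preserving positive twist map with no conjugate points, and that statement is false: the map lifted by $F(x,y)=(x+y,\,y+\varepsilon)$ with $\varepsilon>0$ preserves Lebesgue, is a positive twist with no conjugate points ($DF$ is a constant unipotent matrix), has $\Psi_1\equiv0$ and $\Psi_{-1}\equiv\varepsilon$, hence $X^-=\T\times(0,\varepsilon)$ is a wandering open set of measure $\varepsilon$ and $\mathrm{Fix}(F)=\emptyset\neq\tilde C_F$. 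So the exact-symplectic hypothesis cannot enter only through preservation of Lebesgue, contrary to your closing remark.

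What the paper's proof does that you are missing is precisely the step that replaces an infinite ambient measure by a finite one. Exactness gives the zero-flux identity $\int_{\T}(\psi_{-1}-\psi_1)\,dx=0$, which forces, once $\Psi_1\neq\Psi_{-1}$, the existence of intervals on which $\Psi_{-1}-\Psi_1$ is positive and others on which it is negative; this yields nonempty pieces $X_1^-$ and $X_2^+$ of both $X^-$ and $X^+$. Lemma~\ref{lemma either or} then traps the forward orbit of $X_1^-$ (or the backward orbit of $X_2^+$) between a vertical line and two continuous graphs, hence inside a \emph{bounded} region of $\R^2$. Only at that point does the measure argument close: the sets $F^n(X_1^-)$, $n\geq1$, are pairwise disjoint, all of the same positive Lebesgue measure, and confined to a region of finite measure, which is impossible. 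You need to supply the zero-flux step and the boundedness coming from Lemma~\ref{lemma either or} (or some equivalent confinement of the orbit) before any recurrence or mass-counting argument can apply.
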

\begin{proof}
	By Lemma \ref{lemma graph Cg}, the set $\tilde{C}_{F}=\{ (x,y)\in\R^2 :\ p_1\circ F(x,y)=x \}$	is the graph of a function $\Psi_1:\R\rightarrow\R$ that lifts a function $\psi_1:\T\rightarrow\R$ of class $\mathcal{C}^1$, while $F(\tilde{C}_F)$ is the graph of a function $\Psi_{-1}:\R\rightarrow\R$ that lifts a function $\psi_{-1}:\T\rightarrow\R$ of class $\mathcal{C}^1$. We want to prove that $\Psi_1=\Psi_{-1}$ to get the proposition. We will argue by contradiction supposing that $\Psi_1\neq\Psi_{-1}$. We know that $\int_{\T}\psi_{-1}(x)-\psi_1(x)\ dx=0$ because $f$ is exact symplectic. So there exists $x_1<x_1'<x_2<x_2'$ such that $\Psi_{-1}-\Psi_1$ vanishes on $x_1,x_1',x_2,x_2'$, is positive on $(x_1,x_1')$ and negative on $(x_2,x_2')$. Let us define
	$$
	X_1^-:=\{ (x,y)\in(x_1,x_1')\times\R :\ \Psi_1(x)<y<\Psi_{-1}(x) \}
	$$
	and
	$$
	X_2^+:=\{ (x,y)\in(x_2,x_2')\times\R :\ \Psi_{-1}(x)<y<\Psi_1(x) \}.
	$$
	%\noindent Since $f$ is an exact symplectic map, the intersection $\tilde{C}_F\cap F(\tilde{C}_F)$ is not empty. 
%	Let $z_0,z_1\in \tilde{C}_F\cap F(\tilde{C}_F)$ be such that $p_1(z_1)-p_1(z_0)\in(0,1)$. \\
	%\noindent Argue by contradiction and suppose that $\tilde{C}_{F}\neq \{z\in\R^2 :\ F(z)=z\}$. That is, assume that both
	%$$
%	X^-=\{(x,y)\in\R^2 :\ \Psi_1(x)<y<\Psi_{-1}(x)\}\ \ \text{and}\  \ X^+=\{(x,y)\in\R^2 :\ \Psi_{-1}(x)<y<\Psi_1(x)\}
%	$$
By Lemma \ref{lemma either or}, we know that $\bigcup_{n\geq 0}F^n(X_1^-)$ is on the left of the vertical $\{x_2'\}\times\R$ (oriented as $y$ increasing) or $\bigcup_{n\geq 0}F^{-n}(X_2^+)$ is on the right of $\{x_1\}\times\R$. We will suppose that we are in the first case, the other case can be treated similarly. We also know that $\bigcup_{n\geq 0}F^n(X_1^-)$ is on the right of $\{x_1\}\times\R$ and so $\bigcup_{n\geq 1}F^n(X_1^-)$ is below the graph $F(\{x_1\}\times\R)$ because $F$ is a positive twist map. But we have seen in the proof of Lemma \ref{lemma either or} that $\bigcup_{n\geq 1}F^n(X_1^-)$ is above the graph of $\Psi_0=\max(\Psi_1,\Psi_{-1})$. We conclude that $\bigcup_{n\geq 1}F^n(X_1^-)$ is bounded. We have seen in Remark \ref{remark mathscr X} that the $F^n(X_1^-), n\geq 1,$ are pairwise disjoint. The conclusion comes from the fact that $F$ preserves the Lebesgue measure.\\
\end{proof}

~\newline
\noindent\textit{Proof of Theorem \ref{teo intro 2}.} Theorem \ref{teo intro 2} can be deduced easily from Proposition \ref{prop graph rational} and from classical results about twist maps that we will recall and that can be found, for example, in \cite{Her83}. Let $f:\A\rightarrow\A$ be an exact symplectic positive twist map. We fix a lift $F:\R^2\rightarrow\R^2$ of $f$ and will refer to this lift, while talking about rotation numbers. We endow the space $\mathcal{C}^0(\T,\R)$ of continuous maps $\psi:\T\rightarrow\R$ with the uniform topology. We denote $\mathcal{G}(f)$ the set of functions $\psi\in\mathcal{C}^0(\T,\R)$ such that the graph of $\psi$ is invariant and define $\rho(\psi)$ as being the rotation number of the graph of $\psi$.\vspace{3pt}

\begin{itemize}
	\item[$(i)$] For every $\psi,\psi'\in\mathcal{G}(f)$, we have $\rho(\phi)=\rho(\phi')$ if $\psi-\psi'$ vanishes.\vspace{3pt}
	
	\item[$(ii)$] For every $\psi,\psi'\in\mathcal{G}(f)$, we have $\rho(\psi)<\rho(\psi')$ if $\psi<\psi'$.\vspace{3pt}
	
	\item[$(iii)$] There exists at most one map $\psi\in\mathcal{G}(f)$ such that $\rho(\psi)=\rho$, if $\rho\in\R\setminus\mathbb{Q}$.\vspace{3pt}
	
	\item[$(iv)$] If there exists $\psi\in\mathcal{G}(f)$ such that $\rho(\psi)=\rho\in\mathbb{Q}$ and such that its graph contains only periodic points, there is no map $\psi'\neq\psi$ in $\mathcal{G}(f)$ such that $\rho(\psi')=\rho$.\vspace{3pt}
	
	\item[$(v)$]The map $\psi\mapsto\rho(\psi)$ is continuous in $\mathcal{G}(f)$.\vspace{3pt}
	
	\item[$(vi)$] By Birkhoff's theory, for every compact set $K\subset\A$, there exists $M_K>0$ such that $\psi$ is $M_K$-Lipschitz if $\psi$ belongs to $\mathcal{G}(f)$ and if its graph is included in $K$.\vspace{3pt}
	
\end{itemize}
Suppose now that $f$ has no point with conjugate points. For every $\rho\in\mathbb{Q}$, define $G_{\rho}=F^q-(p,0)$, where $\rho=\frac{p}{q},p\in\Z,q\in\N^*$ is written in an irreducible way. Applying Proposition \ref{prop graph rational} to $G_{\rho}$, one deduces that $\text{Fix}(G_{\rho})$ is the graph of a continuous map $\psi_{\rho}:\T\rightarrow\R$. This graph is invariant by $f$ and it is the set of periodic points of $f$ of rotation number $\rho$. A periodic point is above this graph if its rotation number is larger that $\rho$ and below if it is smaller. In particular one has $\rho<\rho'\Leftrightarrow\psi_{\rho}<\psi_{\rho'}$.

Fix $n\geq 0$. The maps $\psi_{\rho},\vert\rho\vert\leq n$, are uniformly bounded, but they are also uniformly Lipschitz by $(vi)$. So, by Ascoli-Arzelà's theorem the set $\mathcal{F}_n:=\{\psi_{\rho} :\ \rho\in\mathbb{Q}\cap[-n,n]\}$ is relatively compact in $\mathcal{C}^0(\T,\R)$. Moreover, it holds that $\overline {\mathcal{F}}_n\subset \mathcal{G}(f)$ and that the image of $\overline{\mathcal{F}}_n$ by $\rho$, being compact, is equal to $[-n,n]$. One knows by $(iii)$ and $(iv)$ that $\rho$ is injective on $\overline{\mathcal{F}}_n$ and consequently induces a homeomorphism from $\overline{\mathcal{F}}_n$ to $[-n,n]$. We note $\rho\mapsto\psi_{\rho}$ its inverse. The set $\mathcal{F}:=\{ \psi_{\rho} :\ \rho\in\mathbb{Q} \}$ can be written as $\mathcal{F}=\bigcup_{n\geq 1}\mathcal{F}_n$. This implies that our original family $(\psi_{\rho})_{\rho\in\mathbb{Q}}$ can be extended to a continuous and injective family $(\psi_{\rho})_{\rho\in\R}$. The function $z\mapsto \rho(z)$ is locally constant on $\text{Fix}(f)$ and so bounded on every compact set of $\A\cap\text{Fix}(f)$. This implies that $\lim_{n\rightarrow+\infty}	\psi_n(x)=+\infty$ and more generally that $\lim_{\rho\rightarrow+\infty}\min\psi_{\rho}(x)=+\infty$ because $\psi_{\lfloor \rho\rfloor}<\psi_{\rho}$. Similarly, it holds that $\lim_{\rho\rightarrow-\infty}\max\psi_{\rho}(x)=-\infty$. This means that the map $\rho\mapsto \psi_{\rho}$ is proper and so defines a homeomorphism from $\R$ to $\overline{\mathcal{F}}=\bigcup_{n\geq 1}\overline{\mathcal{F}}_n$. Let us fix $x\in\T$. The map $\rho\mapsto\psi_{\rho}(x)$ is continuous, increasing and proper. So it defines a homeomorphism from $\R$ to $\R$. More precisely, the map $\A\ni(x,\rho)\mapsto (x,\psi_{\rho}(x))\in\A$ is a homeomorphism, which means that $f$ is $\mathcal{C}^0$-integrable.\\
\hfill\qed

\begin{remark}\label{adapt bounded annulus}
	The proof of Theorem \ref{teo intro 2} can be adapted to any $f$-invariant bounded essential subannulus. Let $\psi:\T\rightarrow\R$ be an $f$-invariant essential curve and let $U$ be the essential subannulus that is the connected component of $\A\setminus\psi(\T)$ lying above the curve. Let $\bar{\rho}$ be the rotation number of $\psi$. %Let $\rho(\Psi)$ be the rotation number for a lift $F$ of $f$ of $\Gamma(\R)$, where $\Gamma=\gamma\circ p$ and $p:\R\rightarrow\T$ is the universal covering of $\T$. 
	We can consider the set $\mathcal{F}$ of functions $\psi_{\rho}$ only for $\rho\in\mathbb{Q},\rho>\bar{\rho}$. Then the exact symplectic twist map restricted to $U$ $f_{\vert U}:U\rightarrow U$ is $\mathcal{C}^0$-integrable if and only if $f_{\vert U}$ does not have points with conjugate points. The adapted argument holds for the connected component lying under the $f$-invariant curve. Thus, the result holds for any $f$-invariant bounded essential subannulus by considering it as intersection of a lower and an upper $f$-invariant unbounded essential subannulus.
\end{remark}

In the end, we want to present an alternative proof for the result in \cite{ChengSun} that is due to an anonymous referee. His/her proof uses the results in \cite{bangert} and so a different approach from our one. In Proposition 1 in \cite{ChengSun}, Cheng and Sun prove that if an area preserving twist map has no points with conjugate points, then the Hessian matrix of the second partial derivatives of the associated action functional is positively definite. Using Bangert's notations (see \cite{bangert}), the referee observes that for each rational $\frac{p}{q}$, it holds that $p_0(\mathcal{M}^{per}_{p/q})=\R$\footnote{Here $\mathcal{M}^{per}_{p/q}$ corresponds to the set of periodic orbits that minimize the action functional and have rotation number equal to $p/q$.}. Otherwise, there exist neighbouring elements $\textbf{x}^-<\textbf{x}^+$ in $\mathscr{M}^{per}_{p/q}$ and, by the Mountain Pass Lemma, we find a minmax $(p,q)$-periodic $\textbf{x}^*$ between $\textbf{x}^-$ and $\textbf{x}^+$: this would contradict Proposition 1 in \cite{ChengSun}. Repeating then the same arguments used in the proof of Theorem \ref{teo intro 2}, one can conclude.

~\newline
~\newline
\textbf{Acknowledgements.} \noindent The first author is very grateful to prof. Marie-Claude Arnaud and Andrea Venturelli for precious advices and discussions. The authors thank the anonymous referees for their suggestions and comments which helped to improve the presentation of the paper.

\bibliographystyle{alpha}
\bibliography{Bibliography}
\end{document}